\documentclass{amsart}

\usepackage[T1]{fontenc}
\usepackage[utf8]{inputenc}
\usepackage{amsmath,amsthm,amsfonts,amscd,amssymb,eucal,latexsym,mathrsfs}
\usepackage{stmaryrd}
\usepackage{enumerate}
\usepackage{hyperref}
\usepackage[all]{xy}
\usepackage{etoolbox}
\usepackage{amssymb}
\usepackage{amsfonts}
\usepackage{graphicx}
\usepackage{mathtools}

\usepackage{tikz,tikz-cd}
\usetikzlibrary{shapes.geometric}
\usetikzlibrary{arrows}
\usetikzlibrary{calc}

\usetikzlibrary{positioning}
\usepackage{float}
\usepackage{MnSymbol}
\usetikzlibrary{matrix}
\usepackage{tkz-euclide}

\theoremstyle{plain}
\newtheorem{theorem}{Theorem}[section]
\newtheorem*{theorem*}{Theorem}

\newtheorem*{corollary*}{Corollary}

\newtheorem{lemma}[theorem]{Lemma}

\theoremstyle{definition}
\newtheorem{remark}[theorem]{Remark}

\newtheorem{definition}[theorem]{Definition}
\newtheorem*{definition*}{Definition}

\usetikzlibrary{calc,graphs}
\usepackage{xcolor}
\usetikzlibrary{arrows,decorations.pathmorphing}


\newcommand{\e}{\varepsilon}

\newcommand{\ZI}{\mathbb{Z}}

\DeclareMathOperator{\dom}{\mathrm{dom}}

\DeclareMathOperator{\inj}{\hookrightarrow}

\newcommand{\rk}{\mathrm{rk}}
\newcommand{\val}{\mathrm{val}}

\newcommand{\ts}{\textsection}


\newcommand{\up}[3]{\draw[xshift=#1cm,yshift=#2*.866cm,line width = 0.1mm] (0:0) -- (0:1) -- (60:1) -- (0:0) node at (.5,.3) {#3};}
\newcommand{\down}[3]{\draw[xshift=#1cm,yshift=#2*.866cm,line width = 0.1mm] (0:0) -- (60:1) -- (120:1) -- (0:0)  node at (0,.5) {#3};}
\newcommand{\upthick}[3]{\draw[xshift=#1cm,yshift=#2*.866cm,line width = 0.4mm] (0:0) -- (0:1) -- (60:1) -- (0:0) node at (.5,.3) {#3};}
\newcommand{\downthick}[3]{\draw[xshift=#1cm,yshift=#2*.866cm,line width = 0.4mm] (0:0) -- (60:1) -- (120:1) -- (0:0)  node at (0,.5) {#3};}

\newcommand{\uplarge}[6]{
	\up{#1}{#2}{#3}  
	
	\pgfmathparse{#1 + 1}
    \let \xshift \pgfmathresult
    \up{\xshift}{#2}{#5}
    \down{\xshift}{#2}{#4}
    
	\pgfmathparse{#1 + .5}
    \let \xshift \pgfmathresult
    \pgfmathparse{#2 + 1}
    \let \yshift \pgfmathresult
    \up{\xshift}{\yshift}{#6}
 
  }

\newcommand{\downlarge}[6]{
	\down{#1}{#2}{#3}  
	\up{#1}{#2}{#4}

	\pgfmathparse{#1 + 1}
    \let \xshift \pgfmathresult
    \down{\xshift}{#2}{#5}

	\pgfmathparse{#1 + .5}
    \let \xshift \pgfmathresult
    \pgfmathparse{#2 - 1}
    \let \yshift \pgfmathresult
    \down{\xshift}{\yshift}{#6}
 
  }

\newcommand{\hexagon}[8]{
    
    \up{#1}{#2}{#3}
    \down{#1}{#2}{#4}
     
    \pgfmathparse{#1 - 1}
    \let \xshift \pgfmathresult
    \up{\xshift}{#2}{#5}
    
    \pgfmathparse{#1 - 0.5}
    \let \xshift \pgfmathresult
    \pgfmathparse{#2 - 1}
    \let \yshift \pgfmathresult
    \down{\xshift}{\yshift}{#6}
    \up{\xshift}{\yshift}{#7}
     
    \pgfmathparse{#1 + 0.5}
    \let \xshift \pgfmathresult
    \down{\xshift}{\yshift}{#8}
           
}

\newcommand{\hexagonsmall}[8]{
    

    \draw[xshift=#1cm,yshift=#2*.866cm,line width = 0.1mm] (0:0) -- (0:1) -- (60:1) -- (0:0) node at (.3,.15) {\tiny #3};

\draw[xshift=#1cm,yshift=#2*.866cm,line width = 0.1mm] (0:0) -- (60:1) -- (120:1) -- (0:0)  node at (0,.25) {\tiny #4};

    \pgfmathparse{#1 - 1}
    \let \xshift \pgfmathresult
    \draw[xshift=\xshift cm,yshift=#2*.866cm,line width = 0.1mm] (0:0) -- (0:1) -- (60:1) -- (0:0) node at (.7,.15) {\tiny #5};
    
    \pgfmathparse{#1 - 0.5}
    \let \xshift \pgfmathresult
    \pgfmathparse{#2 - 1}
    \let \yshift \pgfmathresult

 \draw[xshift=\xshift cm,yshift=\yshift*.866cm,line width = 0.1mm] (0:0) -- (60:1) -- (120:1) -- (0:0)  node at (0.2,.7) {\tiny #6};

    \draw[xshift=\xshift cm,yshift=\yshift*.866cm,line width = 0.1mm] (0:0) -- (0:1) -- (60:1) -- (0:0) node at (.5,.6) {\tiny #7};
     
    \pgfmathparse{#1 + 0.5}
    \let \xshift \pgfmathresult
   \draw[xshift=\xshift cm,yshift=\yshift*.866cm,line width = 0.1mm] (0:0) -- (60:1) -- (120:1) -- (0:0)  node at (-.2,.7) {\tiny #8};

}

\newcommand{\hexagonthick}[8]{
    
    \up{#1}{#2}{#3}
    \down{#1}{#2}{#4}
     
    \pgfmathparse{#1 - 1}
    \let \xshift \pgfmathresult
    \up{\xshift}{#2}{#5}
    
    \pgfmathparse{#1 - 0.5}
    \let \xshift \pgfmathresult
    \pgfmathparse{#2 - 1}
    \let \yshift \pgfmathresult
    \down{\xshift}{\yshift}{#6}
    \up{\xshift}{\yshift}{#7}
     
    \pgfmathparse{#1 + 0.5}
    \let \xshift \pgfmathresult
    \down{\xshift}{\yshift}{#8}
        
    \draw[xshift=#1cm,yshift=#2*0.866cm,line width=0.25mm] node[circle, fill=olive, draw=white, scale=0.6] at (0,0) {};
   
    \draw[xshift=#1cm,yshift=#2*0.866cm,line width=0.25mm] (0:1) -- (60:1) -- (120:1) -- (180:1) -- (240:1) -- (300:1) -- cycle;
}

\newcommand{\figureuvw}[3]{

\up{0}{0}{0}
\down{0.5}{-1}{#1}
\down{1}{0}{#2}
\down{0}{0}{#3}

}

\title{The odd triangle ring puzzle problem}
\author{Sylvain Barré, Othmane Oukrid, Mika\"el Pichot}

\begin{document}

\maketitle

\begin{abstract} Ring puzzles are tessellations of the Euclidean plane respecting local constraints around vertices. Such puzzles may arise in geometric group theory, for example,  as embedded flat planes in certain CAT(0) complexes of dimension 2.  In the present paper, we solve the odd ring puzzle problem, which is associated with the unique odd Moebius--Kantor CAT(0) complex  by the method of Sidon sequences. We prove that there are precisely three families of such puzzles, two uncountable families, and a finite family of twelve exceptional puzzles.    
\end{abstract}

\section{Introduction}

  Ring puzzles are tiling puzzles in which a set of flat shapes are  assembled together to form a tessellation of the Euclidean plane, with local constraints at vertices which restricts the permitted neighbourhoods. These constraints are determined by a second set of ring-shaped pieces  placed around the vertices, and these two sets, of flat shapes and rings,  define the puzzle problem entirely. In practice, these sets can often be described by marked polygons and, respectively, marked circles of radius one. 
    When all shapes are equilateral triangles, we say that the ring puzzle problem is a triangle ring puzzle problem. In this case, the rings are marked circles equally subdivided into six arcs. The goal of the present paper is to solve the following triangle ring puzzle problem.
     
 Let $S=\ZI/3\ZI$.  Both the shape set and the ring set are marked by elements in $S$.  The shape set contains  three equilateral triangles endowed with the following marking:

\begin{figure}[H]
 \begin{tikzpicture}
 \up{0}{0}{$0$}
     \draw node at (.5,-.2) {\tiny 0};
     \draw node at (.9,.5) {\tiny 1};
     \draw node at (0.1,.5) {\tiny 2};
    \end{tikzpicture}\ \ \ \ \ \ \ \ \ \ \ \
 \begin{tikzpicture}
 \up{0}{0}{$1$}
     \draw node at (.5,-.2) {\tiny 0};
     \draw node at (.9,.5) {\tiny 1};
     \draw node at (0.1,.5) {\tiny 2};
    \end{tikzpicture}\ \ \ \ \ \ \ \ \ \ \ \ 
 \begin{tikzpicture}
 \up{0}{0}{$2$}
     \draw node at (.5,-.2) {\tiny 0};
     \draw node at (.9,.5) {\tiny 1};
     \draw node at (0.1,.5) {\tiny 2};
    \end{tikzpicture}
    \caption{The shape set of the odd ring puzzle problem}\label{Fig 1}
     \end{figure}
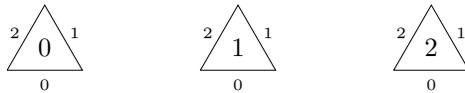
 \noindent Here, both the interior of each triangle, and its sides, are marked by elements in $S$.  The ring set is of the form $\Theta:=\bigsqcup_{s\in S} \Theta_s$, where $\Theta_s$ is a set of three rings marked by the elements of $S$ as follows:

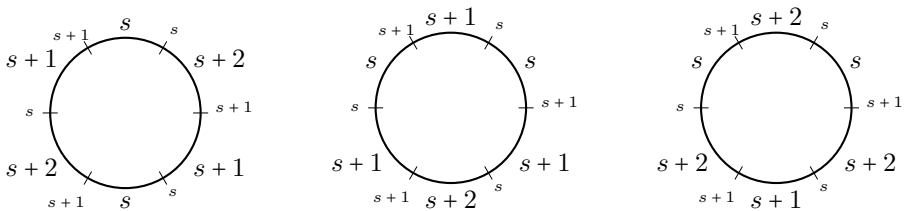
\begin{figure}[H]
 \begin{tikzpicture}
    \tikzset{narrow/.style={inner sep=3pt}}
    \draw [thick] (0,0) circle[radius=1cm];
    \foreach \ang/\lab [evaluate=\ang as \anc using \ang+180]
        in {30/$s+2$, 90/$s$, 150/$s+1$, -150/$s+2$, -90/$s$,-30/$s+1$}
        \draw [ultra thin] (\ang+30:.9)--(\ang+30:1.1) node{} (\ang:1) node[narrow, anchor=\anc]{\lab};
        \foreach \ang/\lab [evaluate=\ang as \anc using \ang+180]
        in {30/{$s$}, 95/{$s+1$}, 150/{$s$}, -150/{$s+1$}, -90/{$s$},-23/{$s+1$}}
        \draw [ultra thin, anchor=\anc] (\ang+35:1.1) node{\tiny\lab};
    \end{tikzpicture}\ \ \ \ \ \ \
 \begin{tikzpicture}
    \tikzset{narrow/.style={inner sep=3pt}}
    \draw [thick] (0,0) circle[radius=1cm];
    \foreach \ang/\lab [evaluate=\ang as \anc using \ang+180]
        in {30/$s$, 90/$s+1$, 150/$s$, -150/$s+1$, -90/$s+2$,-30/$s+1$}
        \draw [ultra thin] (\ang+30:.9)--(\ang+30:1.1) node{} (\ang:1) node[narrow, anchor=\anc]{\lab};
        \foreach \ang/\lab [evaluate=\ang as \anc using \ang+180]
        in {30/{$s$}, 95/{$s+1$}, 150/{$s$}, -150/{$s+1$}, -90/{$s$},-23/{$s+1$}}
        \draw [ultra thin, anchor=\anc] (\ang+35:1.1) node{\tiny\lab};
    \end{tikzpicture}\ \ \ \ \ \ \
 \begin{tikzpicture}
    \tikzset{narrow/.style={inner sep=3pt}}
    \draw [thick] (0,0) circle[radius=1cm];
    \foreach \ang/\lab [evaluate=\ang as \anc using \ang+180]
        in  {30/$s$, 90/$s+2$, 150/$s$, -150/$s+2$, -90/$s+1$,-30/$s+2$}
        \draw [ultra thin] (\ang+30:.9)--(\ang+30:1.1) node{} (\ang:1) node[narrow, anchor=\anc]{\lab};
        \foreach \ang/\lab [evaluate=\ang as \anc using \ang+180]
        in {30/{$s$}, 95/{$s+1$}, 150/{$s$}, -150/{$s+1$}, -90/{$s$},-23/{$s+1$}}
        \draw [ultra thin, anchor=\anc] (\ang+35:1.1) node{\tiny\lab};
    \end{tikzpicture}
    \caption{The set $\Theta_s$ of rings ($s\in S$) in the odd ring puzzle problem}\label{Fig 2} 
\end{figure}

\noindent Again, both the edges and the vertices in every ring are marked by elements in $S$.  A solution to this ring puzzle problem, called an odd ring puzzle, is a tessellation of the Euclidean plane by equilateral triangles having markings on every (interior) triangle, and every edges (sides), and which respects the given local constraints in the obvious way. Formally, we  require that, for every vertex $x$ in the tessellation, there exists an $R\in \Theta$, and a label preserving isomorphism $\rho\colon L_x \to R$, from the link $L_x$ of $x$ (i.e., any sphere of sufficiently small radius around $x$, endowed with the angular metric), where every vertex and, respectively, every arc in $L_x$ is labeled by the corresponding elements in $S$ in the adjacent edge and, respectively, triangle, of the marked tessellation.

 Ring puzzles were introduced in \cite{autf2} in order to study the geometric properties of the automorphism group  of the free group on two generators. The puzzle problem described above  was introduced more recently in \cite[\ts 7.2]{Sidon} and belongs to a large family of such problems obtained by ``the method of Sidon sequences''. It is associated with the group of automorphisms of the unique odd Moebius--Kantor complex (we shall provide more details on this complex in \ts\ref{S - Roots}) and we shall refer to it as the odd ring puzzle problem for simplicity.   Theorem \ref{T - main theorem} below is the first non trivial case of a complete solution of a Sidon type ring puzzle problem.

The following is a simple but important property of the odd ring puzzle problem.

\begin{lemma}\label{L - edge labelling} The ring set in Fig.\ \ref{Fig 2} defines a unique edge marking (by elements in $S$) in every odd ring puzzle.
\end{lemma}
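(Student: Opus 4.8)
The plan is to reduce the statement to a finite local analysis of the nine rings making up $\Theta=\bigsqcup_{s\in S}\Theta_s$ and then to propagate the resulting constraints across the tessellation. First I would make explicit the dictionary implicit in the definition of a puzzle: at a vertex $x$ the six arcs of the link $L_x$ record the interior markings of the six triangles around $x$, while the six vertices of $L_x$ record the markings of the six edges issuing from $x$, and the local condition is that $L_x$ be label-preservingly isomorphic to some $R\in\Theta$. The elementary observation that drives everything, read directly off Fig.\ \ref{Fig 2}, is that the three rings constituting a single family $\Theta_s$ carry \emph{one and the same} vertex-labelling, namely the alternating pattern $s,s+1,s,s+1,s,s+1$, and differ only in their arc-labellings. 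Hence the six edge markings around $x$ depend only on the index $s$ of the family $\Theta_s$ containing $L_x$, and their set of values is exactly $\{s,s+1\}$, which conversely determines $s$. Since an edge marking is by definition a genuine single-valued function, consistency at each edge is automatic; the uniqueness assertion therefore reduces to a single point: in a puzzle the index $s$ at every vertex is determined by the interior (triangle) markings alone.

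For this I would inspect the arc-labellings of the nine rings. The families corresponding to the second and third rings of Fig.\ \ref{Fig 2} are harmless: their arc-multisets are the three distinct patterns in which one element of $S$ occurs three times (and the remaining two occur twice and once), so these multisets are pairwise distinct and never coincide with each other across indices. At such a vertex the index $s$ is read off from the interior markings, and with it the full edge labelling around $x$ is forced. Consequently any two edge markings compatible with $\Theta$ must agree at every vertex of these two types.

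The crux, and the main obstacle, is the first family of Fig.\ \ref{Fig 2}. Its arc-labelling is the balanced cyclic pattern in which each element of $S$ occurs twice, and the three rings of this type, for $s=0,1,2$, are cyclic rotations of one another, distinguished \emph{only} by their vertex (edge) labels. At such a vertex the interior data does not by itself fix the index, so the two a priori admissible edge markings must be separated globally rather than locally. The plan is to resolve this by propagation: every edge at such a vertex $x$ is shared with a neighbour, and reading its label from a neighbour whose index is already determined constrains $s$ via $\{s,s+1\}$; two determined neighbouring edges carrying different values pin $s$ down completely. The honest difficulty then lies in regions consisting entirely of vertices of the balanced type, where no determined neighbour is available to start the propagation. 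I expect the real work to be a monodromy/consistency argument around cycles in such a region, showing that the locally available choices glue without contradiction — this is presumably where the ``odd'' nature of the underlying Moebius--Kantor complex enters, via a parity invariant of closed paths. Granting this, every vertex receives a unique index, hence a unique edge labelling, and the edge marking of the puzzle is unique.
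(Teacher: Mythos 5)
Your proposal aims at a stronger statement than the lemma asserts, and the route you take has two genuine gaps. On the first point: the uniqueness intended here (and stated explicitly in the paper's proof) is that the edge marking is determined \emph{by the choice of an injective marking on one initial triangle}, not that it is recoverable from the interior (face) markings alone. The paper's argument uses no face labels at all: since all three rings of $\Theta_s$ carry the identical alternating vertex labelling $s,s+1,s,s+1,s,s+1$, knowing the labels of two adjacent edges at a vertex (for instance the two sides of an already-marked triangle meeting there) determines both $s$ and the phase of the alternation, hence all six edge labels at that vertex; propagating outward from the initial triangle and exhibiting the resulting periodic pattern (which is consistent) finishes the proof.

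On the second point, your reduction rests on a false intermediate claim. From ``the six edge markings around $x$ depend only on the index $s$'' you conclude that uniqueness reduces to determining $s$ at every vertex. But $s$ only determines the unordered pair $\{s,s+1\}$ of labels occurring at $x$: the alternating pattern has two phases. Concretely, for the second and third rings of $\Theta_s$ the reflection through the arc carrying the label of multiplicity one preserves the arc labelling but swaps the two adjacent ring vertices, which carry the distinct labels $s$ and $s+1$; so even at a vertex where the face multiset pins down $s$, the face data leaves two admissible edge labellings, and your claim that the edge labelling is ``forced'' there is incorrect. Determining $s$ everywhere therefore does not by itself determine the edge marking (you would additionally need, say, that adjacent vertices carry distinct indices, which you do not establish). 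Finally, for the balanced (first) ring type you explicitly defer the crux to an unspecified monodromy/parity argument invoking the oddness of the Moebius--Kantor complex; that step is left entirely open, and it is exactly the difficulty that the paper's propagation argument never encounters, since the vertex labelling common to all rings of $\Theta_s$ already rigidifies everything once one triangle is marked.
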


\begin{proof} The unique edge marking is

\begin{figure}[H]
 \begin{tikzpicture}
 \hexagon{0}{0}{}{}{}{}{}{};
 \hexagon{1}{0}{}{}{}{}{}{};
 \hexagon{2}{0}{}{}{}{}{}{};
 \hexagon{3}{0}{}{}{}{}{}{};
 \hexagon{.5}{1}{}{}{}{}{}{};
 \hexagon{1.5}{1}{}{}{}{}{}{};
 \hexagon{2.5}{1}{}{}{}{}{}{};
 \hexagon{3.5}{1}{}{}{}{}{}{};
     \draw node at (-.5,-.15) {\tiny 2};
     \draw node at (.5,-.15) {\tiny 0};
     \draw node at (1.5,-.15) {\tiny 1};
     \draw node at (2.5,-.15) {\tiny 2};
     \draw node at (3.5,-.15) {\tiny 0};
     \draw node at (-.1,.45) {\tiny 0};
     \draw node at (.9,.45) {\tiny 1};
     \draw node at (1.9,.45) {\tiny 2};
     \draw node at (2.9,.45) {\tiny 0};
     \draw node at (3.9,.45) {\tiny 1};
     \draw node at (0.4,.45) {\tiny 2};
     \draw node at (1.4,.45) {\tiny 0};
     \draw node at (2.4,.45) {\tiny 1};
     \draw node at (3.4,.45) {\tiny 2};
     
     \draw node at (-.1,-.5) {\tiny 0};
       \draw node at (.9,-.5) {\tiny 1};
     \draw node at (1.9,-.5) {\tiny 2};
     \draw node at (2.9,-.5) {\tiny 0};
     \draw node at (0.4,-.5) {\tiny 2};
     \draw node at (1.4,-.5) {\tiny 0};
     \draw node at (2.4,-.5) {\tiny 1};
     \draw node at (3.4,-.5) {\tiny 2};
     
          \draw node at (0,.75) {\tiny 1};
     \draw node at (1,.75) {\tiny 2};
     \draw node at (2,.75) {\tiny 0};
     \draw node at (3,.75) {\tiny 1};
     \draw node at (4,.75) {\tiny 2};
     \begin{scope}[shift={(.5,.9)}]
          \draw node at (-.1,.45) {\tiny 2};
     \draw node at (.9,.45) {\tiny 0};
     \draw node at (1.9,.45) {\tiny 1};
     \draw node at (2.9,.45) {\tiny 2};
     \draw node at (-.6,.45) {\tiny 0};
     \draw node at (0.4,.45) {\tiny 1};
     \draw node at (1.4,.45) {\tiny 2};
     \draw node at (2.4,.45) {\tiny 0};
     \draw node at (3.4,.45) {\tiny 1};
\end{scope}

    \end{tikzpicture}
     \end{figure}
     \noindent It is unique in the sense that it is determined from the choice of a marking on any initial triangle. Note that the edge marking on every triangle is necessarily injective.  
          \end{proof}
  
It follows by Lemma \ref{L - edge labelling}  that the $S$-marking of the edge set in an odd ring puzzle (and in fact, in every Sidon sequence ring puzzle as described in \cite{Sidon}) is essentially optional: once an (injective) edge marking is chosen for the initial triangle, the  marking is uniquely determined on the whole tessellation. Each edge is labelled by an element in $S$, in such a way  that every triangle in the tessellation contains all three edge labels in $S$---while every vertex is incident to only two.   
 It is therefore superfluous to provide markings for the edges in an odd puzzle, once the initial triangle is marked as a reference point. 
  
  Lemma \ref{L - edge labelling} also shows that the rings in the set $\Theta_s$ can exclusively be used at vertices which are adjacent to edges with labels $s$ and $s+1$.  Furthermore, since the label $s=0$ belongs to an arc in every ring, it is obvious that every solution to the odd ring puzzle problem must contain the triangle marked with $s=0$. 

Therefore, a natural approach to solve the odd ring puzzle problem would begin with a triangle with a zero marking in its interior (and a fixed injective marking of its three sides),  and start to  ``build around it'' using the permitted neighbourhoods at vertices. Unfortunately, this does not to work (as we shall explain in \ts \ref{S - Meso}). 
The goal of this paper is to present an alternative approach which relies on ideas from geometric group theory, and leads to a complete classification. A basic concept  which proves particularly useful for this specific puzzle is that of root distribution. In the present context, a root distribution on the equilateral triangle lattice simply amounts  to choosing, for every vertex in the lattice, precisely one of the three simplicial directions at this vertex. 

The following is our main result. The first step in the proof is to classify all root distributions which are associated to our puzzle problem.  We refer to \ts\ref{S - Roots} for  precise definitions and  a classification.

\begin{theorem}[Classifying the odd ring puzzles] \label{T - main theorem}
Let $S=\ZI/3\ZI$ and consider the shape and ring sets defined in Fig.\ \ref{Fig 1} and Fig.\ \ref{Fig 2}, respectively. There are precisely three families of solutions:
\begin{enumerate}
\item Uncountably many puzzles obtained as unions of flat strips of height 1 and period 6;
\item Uncountably many puzzles obtained as unions of flat strips of height 2 and period 6; 
\item Twelve exceptional puzzles not belonging to Case (1) or Case (2). 
\end{enumerate}
\end{theorem}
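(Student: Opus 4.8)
The plan is to classify puzzles through two successively coarser pieces of combinatorial data, an edge marking and then a root distribution, and to solve the problem one layer at a time.

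First, by Lemma \ref{L - edge labelling} the edge marking is rigid, so a puzzle is nothing but an assignment $f$ of an element of $S$ to the interior of each triangle, subject to the local condition that, around every vertex $x$, the cyclic word of the six interior values read off the triangles incident to $x$ agrees, after the rotations and reflections permitted by the fixed edge labels at $x$, with the arc word of one of the rings in $\Theta_s$, where $s$ is determined by the two edge labels meeting at $x$. I would first record, for each $s$, exactly which $6$-tuples are admissible at an $s$-vertex; since $\Theta_s$ has only three rings this is a short finite computation, and it exhibits the symmetry types of the three rings (the first is invariant under the $180^\circ$ rotation, while the other two each carry a single mirror axis, the two being interchanged by the involution of $S$ fixing $s$).

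Second, and this is the heart of the argument, I would use the machinery of \ts\ref{S - Roots} to attach to every vertex of a puzzle a root, that is, one of the three simplicial directions at that vertex, canonically determined by which ring occurs there and by its placement relative to the fixed edge labelling; this produces the root distribution associated to the puzzle. The key structural step is to show that these distributions are highly constrained: the choice of ring at a vertex forces, through the two triangles it shares with each neighbour, strong compatibility relations between the roots at the two endpoints of every edge. Propagating these relations across the lattice, I would establish a rigidity dichotomy: either all roots are parallel, pointing in a single one of the three directions across the whole plane, in which case the plane is organized into parallel flat strips, or the root field turns, and one must show that turning can occur only along finitely many globally consistent patterns. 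I expect this classification of admissible root distributions to be the main obstacle, since it is a genuinely global constraint-satisfaction problem on the infinite triangular lattice; the difficulty is to rule out all the a priori ways for the roots to change direction while keeping the local ring conditions satisfiable everywhere, and to reduce them to a finite explicit list. This is also precisely where the naive ``build around a $0$-triangle'' approach breaks down, and where the root distribution replaces it.

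Third, I would reconstruct the markings from each admissible root distribution. For a parallel distribution the vertex conditions decouple into one-dimensional conditions along each strip: within a strip the interior values are forced to be periodic of period $6$, and I would check that adjacent strips can be glued in more than one way, with the gluings across distinct strip boundaries independent of one another; since there are infinitely many boundaries this yields $2^{\aleph_0}$ puzzles. Carrying this out for the two admissible parallel types produces the two uncountable families of Cases (1) and (2), of heights $1$ and $2$ respectively, and I would verify that these families are disjoint and contain no puzzle with a turning root field. Finally, for each of the finitely many turning root distributions I would solve the resulting finite marking problem directly; tallying the solutions, and discarding any that also arise as a union of strips, should leave exactly the twelve exceptional puzzles of Case (3). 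The remaining bookkeeping, namely that the three cases are mutually exclusive and jointly exhaustive and that the counts are respectively uncountable, uncountable, and twelve, then completes the proof.
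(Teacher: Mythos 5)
Your overall architecture coincides with the paper's: rigidify the edge marking via Lemma \ref{L - edge labelling}, attach to each puzzle the root distribution given by the rank-$\frac{3}{2}$ direction of the ring at each vertex, classify the admissible distributions, and then recover the interior markings over each distribution. However, the proposal has a genuine gap precisely at the step you yourself flag as the main obstacle: the classification of the admissible root distributions is not carried out, and the shape you predict for its answer is wrong. The correct local constraint is not a pairwise relation along edges but a parity condition on triangles (each large triangle must contain an odd number of rank-$2$ directions), and the resulting trichotomy is not ``all roots parallel'' versus ``finitely many turning patterns.'' In the paper (Theorem \ref{t - odd root distributions}) the two infinite families of distributions are unions of $1$-periodic rank-$2$ strips of height $2$ and unions of $2$-periodic rank-$2$ strips of height $1$; in neither case are all roots parallel across the plane --- the height-$2$ strips have a horizontal rank-$\frac{3}{2}$ center geodesic flanked by non-horizontal boundary roots, and the height-$1$ strips have $2$-periodic (alternating) boundary roots, with an independent binary choice on each boundary geodesic producing the uncountability. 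The exceptional case is not a finite list of ``turning'' distributions but a single special distribution $D_0$, isolated through a chain of lemmas (Lemmas \ref{L3}, \ref{L4}, and \ref{L - unique odd puzzle}) organized around whether the distribution contains a rank-$\frac{3}{2}$ geodesic, a maximal rank-$\frac{3}{2}$ half-geodesic, or only bounded rank-$\frac{3}{2}$ segments; the twelve exceptional puzzles then arise as the twelve markings compatible with this one distribution ($2$ placements of the centerpiece times $3$ center labels times $2$ adjacent labels), not as solutions over twelve distinct distributions.

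Without that classification your plan does not close: the dichotomy you propose would both miss admissible distributions (the genuinely non-parallel strip distributions that carry the two uncountable families) and leave unproved the key exclusion that no other global pattern of direction changes is consistent with the parity constraint. The reconstruction step you describe for the strip cases (period-$6$ forcing along a strip, independent gluing choices across boundaries) does match the paper's execution, but it only becomes available once Theorem \ref{t - odd root distributions} is in hand.
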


Furthermore, the flat strips that can be used in Case (1) and (2) are restricted, and the twelves puzzles in Case (3) can be described entirely.

In case (1), only four different flat strips can be used to form an odd ring puzzle. They are of period 6, as depicted with their markings in $\ZI/3\ZI$ in the figures below:

\begin{figure}[H]
 \begin{tikzpicture}[scale=.8]
 
     \draw node at (.5,-.1) {\tiny 0};
     \draw node at (.8,.5) {\tiny 1};
     \draw node at (0.2,.5) {\tiny 2};
     
\up{0}{0}{0}
\down{1}{0}{1}
\up{1}{0}{2}
\down{2}{0}{1}
\up{2}{0}{2}
\down{3}{0}{0}
\up{3}{0}{1}
\down{4}{0}{0}
\up{4}{0}{1}
\down{5}{0}{2}
\up{5}{0}{0}
\down{6}{0}{2}

  \draw node at (7,1) {$A$};
 \draw node at (7,0) {$B$};

    \end{tikzpicture}\ \ \ \ 
 \begin{tikzpicture}[scale=.8]
 
     \draw node at (.5,-.1) {\tiny 0};
     \draw node at (.8,.5) {\tiny 1};
     \draw node at (0.2,.5) {\tiny 2};
     
\up{0}{0}{0}
\down{1}{0}{2}
\up{1}{0}{1}
\down{2}{0}{0}
\up{2}{0}{2}
\down{3}{0}{1}
\up{3}{0}{0}
\down{4}{0}{2}
\up{4}{0}{1}
\down{5}{0}{0}
\up{5}{0}{2}
\down{6}{0}{1}

  \draw node at (7,1) {$A$};
 \draw node at (7,0) {$A$};

    \end{tikzpicture}
    \end{figure}

\begin{figure}[H]
 \begin{tikzpicture}[scale=.8]
 
     \draw node at (.5,-.1) {\tiny 0};
     \draw node at (.8,.5) {\tiny 1};
     \draw node at (0.2,.5) {\tiny 2};
     
\up{0}{0}{1}
\down{1}{0}{0}
\up{1}{0}{1}
\down{2}{0}{2}
\up{2}{0}{0}
\down{3}{0}{2}
\up{3}{0}{0}
\down{4}{0}{1}
\up{4}{0}{2}
\down{5}{0}{1}
\up{5}{0}{2}
\down{6}{0}{0}

  \draw node at (7,1) {$A$};
 \draw node at (7,0) {$B$};
    \end{tikzpicture}
\ \ \ 
 \begin{tikzpicture}[scale=.8]
 
     \draw node at (.5,-.1) {\tiny 0};
     \draw node at (.8,.5) {\tiny 1};
     \draw node at (0.2,.5) {\tiny 2};
     
\up{0}{0}{2}
\down{1}{0}{1}
\up{1}{0}{0}
\down{2}{0}{2}
\up{2}{0}{1}
\down{3}{0}{0}
\up{3}{0}{2}
\down{4}{0}{1}
\up{4}{0}{0}
\down{5}{0}{2}
\up{5}{0}{1}
\down{6}{0}{0}

  \draw node at (7,1) {$B$};
 \draw node at (7,0) {$B$};

    \end{tikzpicture}

    \end{figure}

 In the figures we have indicated by the letters $A$ and $B$ how to stack the strips  together to form a puzzle. We observe that strips 2 and 4 (i.e., on the right in the figure) are in fact  glided symmetry of themselves. Therefore, there are uncountably many odd puzzles,  which are parametrized by a transversal tree of order 3.

In case (2), only three different strips  can be used to form an ring puzzle in this category. They also are strips of period six as shown in the three figures below:

\begin{figure}[H]
 \begin{tikzpicture}[scale=.8]
 
     \draw node at (.5,-.1) {\tiny 0};
     \draw node at (.8,.5) {\tiny 1};
     \draw node at (0.2,.5) {\tiny 2};
     
\hexagon{1}{0}{2}{0}{1}{2}{0}{1}
\hexagon{2}{0}{2}{0}{}{}{2}{1}
\hexagon{3}{0}{2}{1}{}{}{0}{1}
\hexagon{4}{0}{1}{0}{}{}{0}{2}
\hexagon{5}{0}{1}{2}{}{}{0}{2}
\hexagon{6}{0}{1}{0}{}{}{1}{2}
 
     \draw node at (0,-1.2) {$s=$};
     \draw node at (1,-1.2) {0};
     \draw node at (2,-1.2) {1};
     \draw node at (3,-1.2) {2};
     \draw node at (4,-1.2) {0};
     \draw node at (5,-1.2) {1};
     \draw node at (6,-1.2) {2};
 
 \draw node at (8,1) {$A$};
 \draw node at (8,-1) {$B$};
 
    \end{tikzpicture}
    \end{figure}

\begin{figure}[H]
 \begin{tikzpicture}[scale=.8]
 
     \draw node at (.5,-.1) {\tiny 0};
     \draw node at (.8,.5) {\tiny 1};
     \draw node at (0.2,.5) {\tiny 2};
     
\hexagon{1}{0}{0}{1}{0}{1}{2}{1}
\hexagon{2}{0}{0}{2}{}{}{0}{1}
\hexagon{3}{0}{1}{2}{}{}{2}{0}
\hexagon{4}{0}{1}{2}{}{}{1}{0}
\hexagon{5}{0}{1}{0}{}{}{2}{0}
\hexagon{6}{0}{0}{2}{}{}{2}{1}
 
     \draw node at (0,-1.2) {$s=$};
     \draw node at (1,-1.2) {0};
     \draw node at (2,-1.2) {1};
     \draw node at (3,-1.2) {2};
     \draw node at (4,-1.2) {0};
     \draw node at (5,-1.2) {1};
     \draw node at (6,-1.2) {2};

  \draw node at (8,1) {$A$};
 \draw node at (8,-1) {$B$};

    \end{tikzpicture}
    \end{figure}

\begin{figure}[H]
 \begin{tikzpicture}[scale=.8]
 
     \draw node at (.5,-.1) {\tiny 0};
     \draw node at (.8,.5) {\tiny 1};
     \draw node at (0.2,.5) {\tiny 2};
     
\hexagon{1}{0}{0}{2}{0}{2}{1}{2}
\hexagon{2}{0}{2}{1}{}{}{1}{0}
\hexagon{3}{0}{2}{0}{}{}{1}{0}
\hexagon{4}{0}{2}{1}{}{}{2}{0}
\hexagon{5}{0}{0}{1}{}{}{1}{2}
\hexagon{6}{0}{0}{1}{}{}{0}{2}
 
     \draw node at (0,-1.2) {$s=$};
     \draw node at (1,-1.2) {0};
     \draw node at (2,-1.2) {1};
     \draw node at (3,-1.2) {2};
     \draw node at (4,-1.2) {0};
     \draw node at (5,-1.2) {1};
     \draw node at (6,-1.2) {2};
 
  \draw node at (8,1) {$A$};
 \draw node at (8,-1) {$B$};

    \end{tikzpicture}
    \end{figure}

In the figures we have, as in Case (1), indicated by the letters $A$ and $B$ how to stack the strips  together to form a puzzle. This gives uncountably many odd puzzles,  which are again parametrized by a transversal tree of order 3.

Finally, the twelve exceptional puzzles in case (3) can be characterized as being the unique odd puzzles which contain one and only one of the following twelve flat shapes:

\begin{figure}[H]
 \begin{tikzpicture}[scale=.8]
 \upthick{0}{0}{}
 
     \draw node at (.5,-.1) {\tiny 0};
     \draw node at (.8,.5) {\tiny 1};
     \draw node at (0.2,.5) {\tiny 2};
     
\hexagon{1}{0}{2}{1}{0}{1}{0}{1}
\hexagon{.5}{1}{2}{0}{1}{2}{}{}
\hexagon{0}{0}{}{}{1}{0}{2}{}
    \end{tikzpicture}\ \ \ \ 
     \begin{tikzpicture}[scale=.8]
 \upthick{0}{0}{}
 
    \draw node at (.5,-.1) {\tiny 0};
     \draw node at (.8,.5) {\tiny 1};
     \draw node at (0.2,.5) {\tiny 2};
      
\hexagon{1}{0}{1}{2}{0}{2}{0}{2}
\hexagon{.5}{1}{0}{1}{0}{1}{}{}
\hexagon{0}{0}{}{}{0}{2}{0}{}
    \end{tikzpicture}\ \ \ \ 
 \begin{tikzpicture}[scale=.8]
 \upthick{0}{0}{}
 
     \draw node at (.5,-.1) {\tiny 0};
     \draw node at (.8,.5) {\tiny 1};
     \draw node at (0.2,.5) {\tiny 2};
     
\hexagon{1}{0}{0}{1}{0}{1}{2}{1}
\hexagon{.5}{1}{0}{1}{0}{2}{}{}
\hexagon{0}{0}{}{}{0}{2}{0}{}
    \end{tikzpicture}\ \ \ \ 
     \begin{tikzpicture}[scale=.8]
 \upthick{0}{0}{}
 
    \draw node at (.5,-.1) {\tiny 0};
     \draw node at (.8,.5) {\tiny 1};
     \draw node at (0.2,.5) {\tiny 2};
      
\hexagon{1}{0}{0}{2}{0}{2}{1}{2}
\hexagon{.5}{1}{1}{0}{2}{1}{}{}
\hexagon{0}{0}{}{}{2}{0}{1}{}
    \end{tikzpicture}
     \end{figure}

\begin{figure}[H]
 \begin{tikzpicture}[scale=.8]
 \upthick{0}{0}{}
 
     \draw node at (.5,-.1) {\tiny 0};
     \draw node at (.8,.5) {\tiny 1};
     \draw node at (0.2,.5) {\tiny 2};
     
\hexagon{1}{0}{0}{2}{1}{0}{2}{1}
\hexagon{.5}{1}{1}{2}{0}{2}{}{}
\hexagon{0}{0}{}{}{0}{1}{2}{}
    \end{tikzpicture}\ \ \ \ 
     \begin{tikzpicture}[scale=.8]
 \upthick{0}{0}{}
 
    \draw node at (.5,-.1) {\tiny 0};
     \draw node at (.8,.5) {\tiny 1};
     \draw node at (0.2,.5) {\tiny 2};
      
\hexagon{1}{0}{1}{0}{1}{2}{1}{0}
\hexagon{.5}{1}{1}{0}{2}{0}{}{}
\hexagon{0}{0}{}{}{1}{2}{1}{}
    \end{tikzpicture}\ \ \ \ 
     \begin{tikzpicture}[scale=.8]
 \upthick{0}{0}{}
 
     \draw node at (.5,-.1) {\tiny 0};
     \draw node at (.8,.5) {\tiny 1};
     \draw node at (0.2,.5) {\tiny 2};
     
\hexagon{1}{0}{1}{2}{1}{0}{1}{0}
\hexagon{.5}{1}{0}{2}{1}{2}{}{}
\hexagon{0}{0}{}{}{1}{2}{1}{}
    \end{tikzpicture}\ \ \ \ 
     \begin{tikzpicture}[scale=.8]
 \upthick{0}{0}{}
 
    \draw node at (.5,-.1) {\tiny 0};
     \draw node at (.8,.5) {\tiny 1};
     \draw node at (0.2,.5) {\tiny 2};
      
\hexagon{1}{0}{2}{0}{1}{2}{0}{1}
\hexagon{.5}{1}{2}{0}{1}{0}{}{}
\hexagon{0}{0}{}{}{2}{1}{0}{}
    \end{tikzpicture}

     \end{figure}

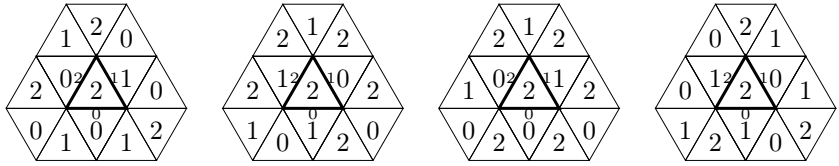
\begin{figure}[H]
 \begin{tikzpicture}[scale=.8]
 \upthick{0}{0}{}
 
     \draw node at (.5,-.1) {\tiny 0};
     \draw node at (.8,.5) {\tiny 1};
     \draw node at (0.2,.5) {\tiny 2};
     
\hexagon{1}{0}{0}{1}{2}{0}{1}{2}
\hexagon{.5}{1}{0}{2}{1}{0}{}{}
\hexagon{0}{0}{}{}{2}{0}{1}{}
    \end{tikzpicture}\ \ \ \  
     \begin{tikzpicture}[scale=.8]
 \upthick{0}{0}{}
 
    \draw node at (.5,-.1) {\tiny 0};
     \draw node at (.8,.5) {\tiny 1};
     \draw node at (0.2,.5) {\tiny 2};
      
\hexagon{1}{0}{2}{0}{2}{1}{2}{0}
\hexagon{.5}{1}{2}{1}{2}{1}{}{}
\hexagon{0}{0}{}{}{2}{1}{0}{}
    \end{tikzpicture}\ \ \ \ 
     \begin{tikzpicture}[scale=.8]
 \upthick{0}{0}{}
 
     \draw node at (.5,-.1) {\tiny 0};
     \draw node at (.8,.5) {\tiny 1};
     \draw node at (0.2,.5) {\tiny 2};
     
\hexagon{1}{0}{2}{1}{2}{0}{2}{0}
\hexagon{.5}{1}{2}{1}{2}{0}{}{}
\hexagon{0}{0}{}{}{1}{0}{2}{}
    \end{tikzpicture}\ \ \ \ 
     \begin{tikzpicture}[scale=.8]
 \upthick{0}{0}{}
 
    \draw node at (.5,-.1) {\tiny 0};
     \draw node at (.8,.5) {\tiny 1};
     \draw node at (0.2,.5) {\tiny 2};
      
\hexagon{1}{0}{1}{0}{2}{1}{0}{2}
\hexagon{.5}{1}{1}{2}{0}{1}{}{}
\hexagon{0}{0}{}{}{0}{1}{2}{}
    \end{tikzpicture}
\caption{The twelve special odd puzzles} 
     \end{figure}

It is not difficult to verify that each one of these twelves flats shapes belong to a unique odd ring puzzle, and that the twelve resulting puzzles are pairwise non-isomorphic (where by isomorphism we mean a label preserving simplicial isomorphism).

\section{The direct approach}\label{S - Meso}

We shall begin with a description of the direct, and most natural approach to the odd ring puzzle problem. Unfortunately, it is inconclusive, and we shall explain the reason why in the present section.

As mentioned in the introduction, it is always possible to start with the following triangle having a zero marking in its interior: 

\begin{figure}[H]
 \begin{tikzpicture}
 \up{0}{0}{0}
     \draw node at (.5,-.2) {\tiny 0};
     \draw node at (.9,.5) {\tiny 1};
     \draw node at (0.1,.5) {\tiny 2};
    \end{tikzpicture}
     \end{figure}

\noindent 

We have indicated the initial edge marking from which the labels on every edge of the tessellation are determined (see Lemma \ref{L - edge labelling}). The objective is  to ``build around this triangle'' and attempt to classify all odd puzzles.  This is a legitimate approach and it will classify all puzzles if it succeeds (since it is clear that the above triangle belongs to any odd puzzle). 

We first remark that, by definition of the ring set (vertex marking), each vertex in the tessellation can only apply three of the nine rings given by $\Theta$. The present section explains how that is due to the so-called ``mesoscopic rank property''. 

Building around the above triangle, we observe that every side can be extended in precisely two ways. Therefore there are in total eight possible initial configurations for an odd ring puzzle. The configurations are: 

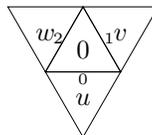
\begin{figure}[H]
\begin{tikzpicture}
\figureuvw{$u$}{$v$}{$w$}
    \draw node at (.5,-.1) {\tiny 0};
     \draw node at (.85,.45) {\tiny 1};
     \draw node at (0.17,.45) {\tiny 2};
 \end{tikzpicture}
\caption{Eight configurations around the initial triangle (here $u,v,w\in \{1,2\}$)}
\end{figure}

The three subwords on the bourdary are: $^0v^10^0u^1$, $^1w^20^1v^2$, and $^2u^00^2w^0$, where we have indicated the edge labelings as exponents. We shall make three remarks:

(a) The first word $^0v^10^0u^1$ necessarily appears in a ring belonging to $\Theta_0$. In fact, it appears twice in each ring. For the first ring (on the left), the two cases are $u=1,v=2$ and $u=2, v=1$, for the second ring, both cases occurs with $u=v=1$, and for the last ring both cases occurs with $u=v=2$.

(b) The second word necessarily appears in ring  belonging to $\Theta_1$. It appears twice in the first ring ($w=2$, $v=1$ or $w=1$, $v=2$), once in the second ring ($v=w=2$), and three times in the last ring ($w=1, v=2$ or $w=2, v=1$   or $w=v=1$).

(c) The third word necessarily appears in ring  belonging to $\Theta_2$. It appears twice in the first ring ($u=1$, $w=2$ or $u=2$, $w=1$), three times in the second ring ($u=2, w=1$, or $u=1, w=2$, or $u=w=2$), and once in the last ring ($u=w=1$).

Let us do the case  $u=v=w=1$, for an example. This case is initially simpler, since the triangle
\begin{figure}[H]
\begin{tikzpicture}
\figureuvw{1}{1}{1}
\end{tikzpicture}
\end{figure}

\noindent can only be extended in two ways (as opposed to eight, in general), as follows: 

\begin{figure}[H]

\begin{tikzpicture}
\hexagon{0}{0}{0}{1}{2}{1}{2}{1}
\hexagon{1}{0}{0}{1}{0}{1}{2}{1}
\hexagon{.5}{1}{0}{2}{0}{1}{0}{1}

\end{tikzpicture}\ \ \ \ \
\begin{tikzpicture}
\hexagon{0}{0}{0}{1}{2}{1}{2}{1}
\hexagon{1}{0}{2}{1}{0}{1}{0}{1}
\hexagon{.5}{1}{0}{2}{0}{1}{0}{1}

\end{tikzpicture}

\end{figure} 

Consider for instance the left hand case. Here there are four possible extensions, because the extension is unique at the top right of the figure. In fact, the extensions are, precisely:

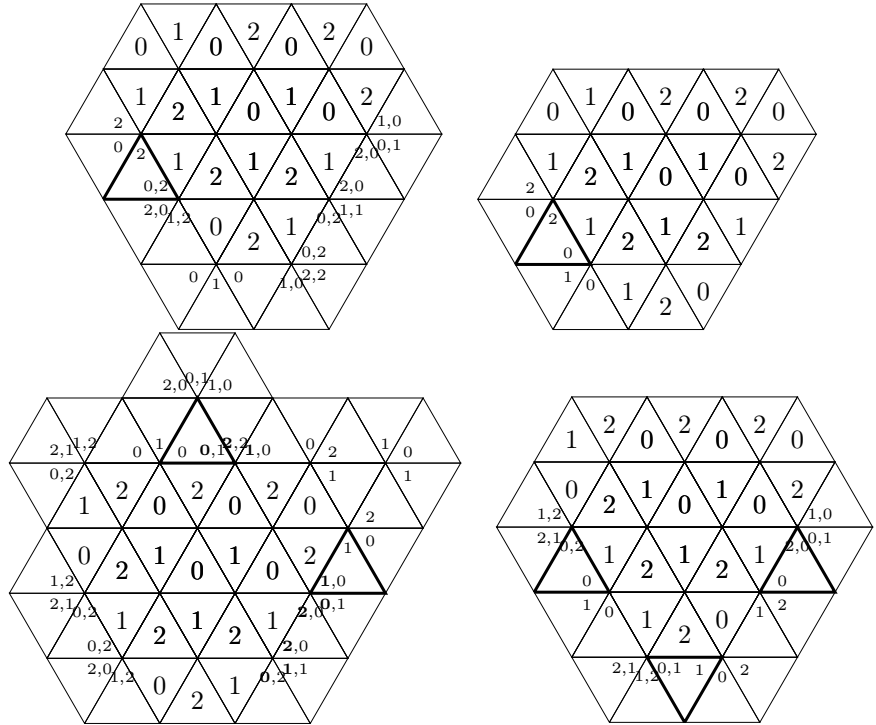
\begin{figure}[H]

\begin{tikzpicture}
\hexagon{0}{0}{0}{1}{2}{1}{2}{1}
\hexagon{1}{0}{0}{1}{0}{1}{2}{1}
\hexagon{.5}{1}{0}{2}{0}{1}{0}{1}

\hexagon{1.5}{1}{0}{2}{0}{1}{0}{2}
\hexagon{-.5}{1}{0}{1}{0}{1}{2}{1}
\hexagon{.5}{-1}{2}{1}{2}{0}{2}{1}

\hexagonsmall{-1}{0}{}{}{2}{0}{2}{}
\hexagonsmall{-.5}{-1}{}{}{0,2}{2,0}{1,2}{}

\hexagonsmall{2}{0}{1,0}{}{}{}{2,0}{0,1}
\hexagonsmall{1.5}{-1}{2,0}{}{}{}{0,2}{1,1}

\hexagonsmall{0}{-2}{}{}{}{0}{1}{0}
\hexagonsmall{1}{-2}{0,2}{}{}{}{1,0}{2,2}

\upthick{-1.5}{-1}

\end{tikzpicture}
\ \ \ 
\begin{tikzpicture}
\hexagon{0}{0}{0}{1}{2}{1}{2}{1}
\hexagon{1}{0}{0}{1}{0}{1}{2}{1}
\hexagon{.5}{1}{0}{2}{0}{1}{0}{1}

\hexagon{1.5}{1}{0}{2}{0}{1}{0}{2}
\hexagon{-.5}{1}{0}{1}{0}{1}{2}{1}
\hexagon{.5}{-1}{2}{1}{2}{1}{2}{0}

\hexagonsmall{-1}{0}{}{}{2}{0}{2}{}
\hexagonsmall{-.5}{-1}{}{}{0}{1}{0}{}



\upthick{-1.5}{-1}

\end{tikzpicture}

\begin{tikzpicture}
\hexagon{0}{0}{0}{1}{2}{1}{2}{1}
\hexagon{1}{0}{0}{1}{0}{1}{2}{1}
\hexagon{.5}{1}{0}{2}{0}{1}{0}{1}

\hexagon{1.5}{1}{0}{2}{0}{1}{0}{2}
\hexagon{-.5}{1}{0}{2}{1}{0}{2}{1}
\hexagon{.5}{-1}{2}{1}{2}{0}{2}{1}

\hexagonsmall{-1}{0}{}{}{1,2}{2,1}{0,2}{}
\hexagonsmall{-.5}{-1}{}{}{0,2}{2,0}{1,2}{}

\hexagonsmall{2}{0}{\textbf{1},0}{}{}{}{\textbf{2},0}{\textbf{0},1}
\hexagonsmall{1.5}{-1}{\textbf{2},0}{}{}{}{\textbf{0},2}{\textbf{1},1}


\hexagonsmall{0}{2}{0}{1}{0}{}{}{}
\hexagonsmall{1}{2}{\textbf{1},0}{\textbf{2},2}{\textbf{0},1}{}{}{}
\upthick{0}{2}


\hexagonsmall{-1}{2}{}{1,2}{2,1}{0,2}{}{}

\hexagonsmall{.5}{3}{1,0}{0,1}{2,0}{}{}{}


\hexagonsmall{2}{2}{2}{0}{}{}{}{1}
\hexagonsmall{2.5}{1}{2}{}{}{}{1}{0}

\upthick{2}{0}

\hexagonsmall{3}{2}{0}{1}{}{}{}{1}

\end{tikzpicture}
\ \ \ 
\begin{tikzpicture}
\hexagon{0}{0}{0}{1}{2}{1}{2}{1}
\hexagon{1}{0}{0}{1}{0}{1}{2}{1}
\hexagon{.5}{1}{0}{2}{0}{1}{0}{1}

\hexagon{1.5}{1}{0}{2}{0}{1}{0}{2}
\hexagon{-.5}{1}{0}{2}{1}{0}{2}{1}
\hexagon{.5}{-1}{2}{1}{2}{1}{2}{0}

\hexagonsmall{-1}{0}{}{}{1,2}{2,1}{0,2}{}
\hexagonsmall{-.5}{-1}{}{}{0}{1}{0}{}

\hexagonsmall{2}{0}{1,0}{}{}{}{2,0}{0,1}
\hexagonsmall{1.5}{-1}{0}{}{}{}{1}{2}

\hexagonsmall{0}{-2}{}{}{}{2,1}{1,2}{0,1}
\hexagonsmall{1}{-2}{}{}{}{1}{0}{2}

\upthick{1.5}{-1}

\upthick{-1.5}{-1}

\downthick{.5}{-3}

\end{tikzpicture}
\caption{Four possible extensions}\label{Fig - 111 4 possible ext}
\end{figure}

In figure \ref{Fig - 111 4 possible ext}, we have extended all drawings analytically whenever possible. In some cases, there are two possible extensions,  which explains why some angles in the figures are endowed with a pair of labels. Extending around one angle of a triangle, or the other angle, may lead to markings that fit together in a unique way, in order to provide a partial extension into a strip of height 1. When this happens, we have shown in the figure which choice of label extends uniquely in bold font; the figure also contains triangles with bold sides, which indicate where the possible extensions fit together in at most one way.  The figures and computations shown above in fact only provide a contradiction in Fig.\ \ref{Fig - 111 4 possible ext}.2 (i.e., top right drawing), on the left hand side of this drawing, displayed as a bold triangle.   

It is apparent from even the initial combinatorial complexity that this approach can hardly be carried out to prove Theorem \ref{T - main theorem}. Conceptually, the reason why this fails is the fact that the unique odd Moebius--Kantor complex has  the exponential mesoscopic rank property (this was established in \cite{rd} in the even case, but a similar argument holds in the odd case). For a Moebius--Kantor complex, the exponential mesoscopic rank property means that the number of Euclidean disks of a given radius, with fixed center, which embed in the given complex but fail to embed into a flat plane of the complex, grows exponentially fast in the radius. These disks  provide partial puzzles for the odd puzzle problem (as in Fig.\ \ref{Fig - 111 4 possible ext}) which are not relevant for our classification, since they are ultimately not embeddable in a full puzzle (i.e., tessellation of the whole Euclidean plane).  An exhaustive classification (not required to establish Th.\ \ref{T - main theorem}) would involve a description of all mesoscopic puzzles---by which we mean all partial puzzles (or at least the maximal ones) which can be formed from the initial set of shapes and rings of the puzzle problem, but do not embed as a subset of a full odd puzzle. Fig.\ \ref{Fig - 111 4 possible ext}.2 is an example of such a puzzle (not a maximal one). It corresponds to a mesoscopic flat disk in the odd Moebius--Kantor complex.

\section{Roots in CAT(0) complexes and ring puzzles}\label{S - Roots}

We shall propose an alternative approach which solves the mesoscopic rank issues raised in the previous section. Our goal is to establish a full classification of all odd puzzles (which are tessellations of the full Euclidean plane) without having to classify the partial (mesoscopic) puzzles.    This relies on concepts in intermediate rank geometry, and we have to recall some definitions first. We also define precisely what Moebius--Kantor complexes are, and the notions of parity, roots, and root distributions.
 
Let $\Delta$ be a nonpositively curved 2-complex (a general reference on nonpositive curvature is \cite{BH}).  If $x\in \Delta$ is a vertex, we write $L_x$ for the link at $x$, endowed with the angular metric. Thus, $L_x$ is a metric graph. We always assume that $\Delta$ is a locally finite complex (in fact, $\Delta$ will only denote the unique odd Moebius--Kantor complex in this paper). 

\begin{definition} We call a \emph{root} at $x$ an isometric embedding $\alpha\colon [0,\pi]\inj L_x$, such that $\alpha(0)$ is a link vertex of degree greater than $2$.
\end{definition}

Every root has a \emph{rank} denoted $\rk(\alpha)$. It is a rational number defined by
\[
\rk(\alpha):=1+{N(\alpha)\over q_\alpha}\in [1,2]
\] 
where 
\[
N(\alpha):=|\{\beta\in \Phi_x\mid \alpha\neq \beta, \alpha(0)=\beta(0), \alpha(\pi)=\beta(\pi)\}|,
\]
$\Phi_x$ denotes the set of roots at $x$ and, for a root $\alpha$,  $q_\alpha$ denotes the degree of $\alpha(0)$ minus 1:
\[
q_\alpha:=\val(\alpha(0))-1.
\]  
We refer to \cite[\ts 4]{chambers} for more details on these definitions.

\begin{definition}
A Moebius--Kantor complex is a CAT(0) 2-complex with equilateral triangle faces in which every link is isomorphic to the Moebius--Kantor graph.
\begin{figure}[H]
\begin{tikzpicture}[scale=.5,line join=bevel,z=-5.5]
\tikzstyle{every node}=[font=\tiny]

\coordinate (H0) at (3,0);
\coordinate (H1) at (2.771,1.148);
\coordinate (H2) at (2.121,2.121);
\coordinate (H3) at (1.148,2.771);
\coordinate (H4) at (0,3);
\coordinate (H5) at (-1.149,2.771);
\coordinate (H6) at (-2.122,2.121);
\coordinate (H7) at (-2.772,1.148);
\coordinate (H8) at (-3,0);
\coordinate (H9) at (-2.772,-1.149);
\coordinate (H10) at (-2.122,-2.122);
\coordinate (H11) at (-1.149,-2.772);
\coordinate (H12) at (-0.001,-3);
\coordinate (H13) at (1.148,-2.772);
\coordinate (H14) at (2.121,-2.122);
\coordinate (H15) at (2.771,-1.149);
\draw[solid,line width=0.1mm,color=black,-] (2.771,1.148) -- (3,0);
\draw[solid,line width=0.1mm,color=black,-] (2.771,1.148) -- (3,0);
\draw[solid,line width=0.1mm,color=black,-] (2.121,2.121) -- (2.771,1.148);
\draw[solid,line width=0.1mm,color=black,-] (2.121,2.121) -- (2.771,1.148);
\draw[solid,line width=0.1mm,color=black,-] (1.148,2.771) -- (2.121,2.121);
\draw[solid,line width=0.1mm,color=black,-] (1.148,2.771) -- (2.121,2.121);
\draw[solid,line width=0.1mm,color=black,-] (0,3) -- (1.148,2.771);
\draw[solid,line width=0.1mm,color=black,-] (0,3) -- (1.148,2.771);
\draw[solid,line width=0.1mm,color=black,-] (-1.149,2.771) -- (0,3);
\draw[solid,line width=0.1mm,color=black,-] (-1.149,2.771) -- (0,3);
\draw[solid,line width=0.1mm,color=black,-] (-1.149,2.771) -- (3,0);
\draw[solid,line width=0.1mm,color=black,-] (-1.149,2.771) -- (3,0);
\draw[solid,line width=0.1mm,color=black,-] (-2.122,2.121) -- (-1.149,2.771);
\draw[solid,line width=0.1mm,color=black,-] (-2.122,2.121) -- (-1.149,2.771);
\draw[solid,line width=0.1mm,color=black,-] (-2.772,1.148) -- (-2.122,2.121);
\draw[solid,line width=0.1mm,color=black,-] (-2.772,1.148) -- (-2.122,2.121);
\draw[solid,line width=0.1mm,color=black,-] (-2.772,1.148) -- (2.121,2.121);
\draw[solid,line width=0.1mm,color=black,-] (-2.772,1.148) -- (2.121,2.121);
\draw[solid,line width=0.1mm,color=black,-] (-3,0) -- (-2.772,1.148);
\draw[solid,line width=0.1mm,color=black,-] (-3,0) -- (-2.772,1.148);
\draw[solid,line width=0.1mm,color=black,-] (-2.772,-1.149) -- (-3,0);
\draw[solid,line width=0.1mm,color=black,-] (-2.772,-1.149) -- (-3,0);
\draw[solid,line width=0.1mm,color=black,-] (-2.772,-1.149) -- (0,3);
\draw[solid,line width=0.1mm,color=black,-] (-2.772,-1.149) -- (0,3);
\draw[solid,line width=0.1mm,color=black,-] (-2.122,-2.122) -- (-2.772,-1.149);
\draw[solid,line width=0.1mm,color=black,-] (-2.122,-2.122) -- (-2.772,-1.149);
\draw[solid,line width=0.1mm,color=black,-] (-1.149,-2.772) -- (-2.122,-2.122);
\draw[solid,line width=0.1mm,color=black,-] (-1.149,-2.772) -- (-2.122,-2.122);
\draw[solid,line width=0.1mm,color=black,-] (-1.149,-2.772) -- (-2.122,2.121);
\draw[solid,line width=0.1mm,color=black,-] (-1.149,-2.772) -- (-2.122,2.121);
\draw[solid,line width=0.1mm,color=black,-] (-0.001,-3) -- (-1.149,-2.772);
\draw[solid,line width=0.1mm,color=black,-] (-0.001,-3) -- (-1.149,-2.772);
\draw[solid,line width=0.1mm,color=black,-] (-0.001,-3) -- (2.771,1.148);
\draw[solid,line width=0.1mm,color=black,-] (-0.001,-3) -- (2.771,1.148);
\draw[solid,line width=0.1mm,color=black,-] (1.148,-2.772) -- (-0.001,-3);
\draw[solid,line width=0.1mm,color=black,-] (1.148,-2.772) -- (-0.001,-3);
\draw[solid,line width=0.1mm,color=black,-] (1.148,-2.772) -- (-3,0);
\draw[solid,line width=0.1mm,color=black,-] (1.148,-2.772) -- (-3,0);
\draw[solid,line width=0.1mm,color=black,-] (2.121,-2.122) -- (1.148,-2.772);
\draw[solid,line width=0.1mm,color=black,-] (2.121,-2.122) -- (1.148,-2.772);
\draw[solid,line width=0.1mm,color=black,-] (2.121,-2.122) -- (1.148,2.771);
\draw[solid,line width=0.1mm,color=black,-] (2.121,-2.122) -- (1.148,2.771);
\draw[solid,line width=0.1mm,color=black,-] (2.771,-1.149) -- (2.121,-2.122);
\draw[solid,line width=0.1mm,color=black,-] (2.771,-1.149) -- (2.121,-2.122);
\draw[solid,line width=0.1mm,color=black,-] (2.771,-1.149) -- (3,0);
\draw[solid,line width=0.1mm,color=black,-] (2.771,-1.149) -- (3,0);
\draw[solid,line width=0.1mm,color=black,-] (2.771,-1.149) -- (-2.122,-2.122);
\draw[solid,line width=0.1mm,color=black,-] (2.771,-1.149) -- (-2.122,-2.122);
\end{tikzpicture}
\caption{The Moebius--Kantor graph}\label{Fig MK Link}
 \end{figure}
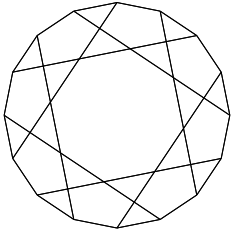
\end{definition}

In a Moebius--Kantor complex, the rank of a root can be either $\frac 3 2$ or $2$. Both types of roots are found in the obvious Hamiltonian cycle of length 16 shown in Fig.\ \ref{Fig MK Link}. Since this graph is 2-transitive, it is obvious that there can only be two distinct orbits of roots, and their rank is straightforward to compute. 

The parity of a face  in a Moebius--Kantor complex $\Delta$ is defined as follows (cf.\  \cite[\ts 2]{parity}). Let $f$ be a face in $\Delta$.  Let $\tilde f$ denote the equilateral triangle formed by the union of $f$ and three faces in $\Delta$ corresponding to a choice of a face  not equal to $f$ adjacent to every side of $f$. We call $\tilde f$ a \emph{large triangle} containing $f$.
 For every vertex $x$ of $f$, the triangle $\tilde f$ determines a root $\alpha_x$ at $x$ in $\Delta$. 

\begin{definition} We call the \emph{parity} of $f$ the parity of the number of roots $\alpha_x$ which are of rank $2$, when $x$ runs over the three vertices of the face $f$.
\end{definition}

This is a well-defined invariant, i.e., the parity of $f$ does not depend on the choice of the large triangle $\tilde f$ containing $f$, by \cite[Lemma 2.1]{parity}. 

\begin{definition}
We say that $\Delta$ is odd (resp., even) if every face in $\Delta$ is odd (resp., even).
\end{definition}

We have shown recently in \cite{Sidon} that there exists a unique odd Moebius--Kantor complex.

It is not difficult to extend these notions to ring puzzles. Namely, let $P=(\Sigma,\Theta)$ be a ring puzzle problem (defined by two sets $\Sigma$ of shapes and $\Theta$ of rings). We shall asssume here to simplify  that $P$ is a marked triangle puzzle problem. Thus, each element in $\Theta$ is a marked circle of length $2\pi$ subdivided equally into six arcs. Every vertex (resp.\ edge) and every edge (resp.\ interior) of every ring (resp.\ shape) is marked such that that adjacent ring edges have distinct markings.

We call root of $P$ a marked simplicial isometric embedding $\alpha\colon D\inj R$, where $D$ is a marked segment of length $\pi$ subdivided into  three arcs, called the domain of $\alpha$, and $R\in \Theta$ is a ring. For instance, the unique marked embedding of the segment $D$ defined by
\begin{figure}[H]
\begin{tikzpicture}[scale=1.4]
\tikzstyle{every node}=[font=\small]

\draw[solid,line width=0.1mm,color=black,-] (0,0) -- (3,0);
\draw[solid,line width=0.1mm,color=black,-] (0,.1) -- (0,-.1);
\draw[solid,line width=0.1mm,color=black,-] (1,.1) -- (1,-.1);
\draw[solid,line width=0.1mm,color=black,-] (2,.1) -- (2,-.1);
\draw[solid,line width=0.1mm,color=black,-] (3,.1) -- (3,-.1);

     \draw node at (0,-.2) {$s$};
     \draw node at (0,.2) {$0$};
     \draw node at (.5,.2) {$s+1$};
     \draw node at (1,-.2) {$s+1$};
     \draw node at (1.5,.2) {$s$};
     \draw node at (2,-.2) {$s$};
     \draw node at (2.5,.2) {$s+2$};
     \draw node at (3,-.2) {$s+1$};
     \draw node at (3,.2) {$\pi$};

\end{tikzpicture}
\end{figure}
\noindent in the first ring of $\Theta_s$ is a root of the odd ring puzzle problem. Let $\Phi_P$ be the set of roots of $P$.

We write $\sim$ for the equivalence relation on $\Phi_P$ generated by $\alpha\sim \beta$ if and only if $\dom \alpha=\dom \beta$ (both roots have an identical domain) or $\mathop{\mathrm{Im}} \alpha=\mathop{\mathrm{Im}}\beta$ (both roots sit in a common ring).  If $\alpha\in \Phi_P$, we call multiplicity of $\alpha$ the number $N_P(\alpha)$ of roots $\beta\sim \alpha$ such that  $\dom \alpha=\dom \beta$. 

We say that a root $\beta$ is  \emph{complement} to a root $\alpha$ if $\alpha(0)=\beta(0)$ and there exists a ring $R\in \Theta$ such that $\mathop{\mathrm{Im}} \alpha\cup \mathop{\mathrm{Im}} \beta=R$. We say that $P$ has the \emph{$\theta_0$-extension property}  if whenever a marked segment $[0,\theta]$ of length $\theta\in [0,2\pi]$ can be embedded in distinct rings  (isometrically and respecting the marking), then $\theta\leq \theta_0$. We say that $P$ is nonpositively curved if it does not have the $(\pi+\e)$-extension for some $\e>0$. The odd ring puzzle problem is nonpositively curved in this sense.   
If $\alpha$ is a root in a nonpositively curved ring puzzle problem, we let $q_\alpha$ denote the order of $\alpha(0)$ minus 1, where the order of $\alpha(0)$ is number of edge markings which are adjacent, in some ring, to the marking of $\alpha(0)$.  If furthermore $\alpha$ has multiplicity $N$, then there are precisely $N$ roots $\beta$ which are complement to a root with domain $\dom(\alpha)$, and therefore $N\leq q_\alpha$.  In this case we define the rank of $\alpha$ to be:
\[
\rk(\alpha):=1+{N_P(\alpha)\over q_\alpha}\in (1,2]
\] 
where: $N_P(\alpha):=|\{\beta\in \Phi_P\mid \dom \alpha=\dom \beta\}|$. With these definitions, in the odd ring puzzle problem, every root has rank 2, except for the roots $\alpha$ in which, in Fig.\ \ref{Fig 2}, $\alpha(0)$ and $\alpha(\pi)$ are positioned horizontally. In this case the rank is $\frac 3 2$, and this corresponds to what happens in the unique odd Moebius--Kantor complex.

\section{Classifying the odd root distributions}

    We shall classify the odd root distributions first. By definition, a \emph{root distribution} on the equilateral triangle lattice is a function which assigns to every vertex in the tessellation one of the three simplicial directions. General root distributions in this sense are studied in \cite{parity} and are related to Moebius--Kantor geometry. By definition, the root distribution associated with an odd ring puzzle takes a vertex to the unique direction determined by the root of rank $\frac 3 2$ in the ring positioned at this vertex. It follows, by the definition of odd ring puzzles (rings in $\Theta_s$), that the other two directions at this vertex correspond to roots of rank 2.  If $t$ is a triangle in the lattice, we call as before large triangle associated with $t$ the triangle with sides of length 2 which contains $t$ in its center as well as the three triangles of the lattice which are adjacent to $t$. The three sides of this large triangle determine three simplicial directions, one at each vertex in $t$. The triangle $t$ is then called odd (given a root distribution) if and only if the number of such directions which are of rank 2 is odd. A root distribution is odd if every lattice triangle is odd.

Let $D$ be an odd root distribution. In what follows, we call geodesic (resp. half-geodesic) a bi-infinite (resp.\ semi-infinite) simplicial segment in the equilateral triangle lattice. Given any (possibly infinite) simplicial segment $s$ in the lattice and a vertex $v$ on $s$, we say that a root of rank $\frac{3}{2}$ at $v$ (i.e., the simplicial direction selected by $D$) is horizontal if it is parallel to $s$. If all roots of rank $\frac{3}{2}$ on a $s$ are horizontal, we say that $s$ itself is of rank $\frac{3}{2}$; if none of them are, then we say that $s$ is of rank 2. 

We call a (possibly infinite) simplicial segment  \emph{1-periodic} (with respect to $D$) if its vertex roots are parallel; if the roots are not parallel, but the roots at distance $2$ are, then we call the segment \emph{2-periodic}. 
 Finally, we call $1$-periodic (resp.\ $2$-periodic) rank $2$  strip any simplicial flat strip in which both boundaries are  1-periodic (resp.\ $2$-periodic) and have rank 2.
 
 We now proceed with the classification.

\begin{lemma}[1-periodic geodesics]\label{L - 1 periodic geodesic}
Every odd root distribution containing a 1-periodic geodesic is a union of 1-periodic rank 2 strips of height 2. Furthermore, a 1-periodic geodesic in an odd root distribution is of rank $\frac{3}{2}$ if and only if geodesics at distance 1 from it are 1-periodic of rank 2. 
\end{lemma}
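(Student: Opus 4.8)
The plan is to slice the lattice into horizontal rows and propagate the $1$-periodic structure outward from $g$ one row at a time. First I would place $g$ horizontally, which is harmless since the symmetry group of the lattice acts transitively on the three simplicial directions, and then reformulate the oddness condition into a usable form. For a face $t$ and a vertex $v$ of $t$, the direction that the large triangle $\tilde t$ determines at $v$ is exactly the direction of the side of $t$ \emph{opposite} to $v$; consequently that direction has rank $2$ precisely when $D(v)$ differs from the direction of the opposite side. Hence $t$ is odd if and only if the number of vertices $v$ of $t$ at which $D(v)$ equals the direction of the side opposite $v$ is even. Writing $e_1,e_2,e_3$ for the three simplicial directions with $e_1$ horizontal, and indexing the rows so that row $k$ sits at height $k$, the decisive structural remark is that every face of the lattice has its three vertices in two consecutive rows; therefore the whole family of oddness conditions is equivalent to the conditions coming from the up- and down-pointing faces between each pair of adjacent rows.

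The heart of the argument is a transfer step. Assume row $k$ is $1$-periodic, say $D\equiv d$ on it, and examine the up-pointing and down-pointing faces joining row $k$ to row $k+1$. Reading off the opposite-side directions at the three vertices of each such face and imposing oddness gives, for each vertex $w$ of row $k+1$, a linear condition over $\IF_2$ relating the indicators $[D(w)=e_1],[D(w)=e_2],[D(w)=e_3]$ to $d$ and to the analogous indicators at the neighbouring vertex of row $k+1$. The computation should collapse to the following dichotomy, which I take to be the crux. If $d=e_1$ (so row $k$ has rank $\tfrac32$), the up-faces force $D(w)\neq e_1$ for every $w$ of row $k+1$, while the down-faces force the two admissible values $e_2,e_3$ to propagate identically along the row; thus row $k+1$ is $1$-periodic of rank $2$, with a single free binary choice between $e_2$ and $e_3$. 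If instead $d\in\{e_2,e_3\}$ (row $k$ of rank $2$), the up-faces force $D(w)=e_1$ for every $w$, the down-faces are then automatically satisfied, and row $k+1$ is the uniquely determined $1$-periodic horizontal row of rank $\tfrac32$. The same computation read downwards, obtained from the reflection $y\mapsto -y$ which fixes $e_1$ and swaps $e_2\leftrightarrow e_3$, governs the passage from row $k$ to row $k-1$.

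With the transfer step in hand the lemma follows by induction outward from $g$. Since $g$ is $1$-periodic, every row is $1$-periodic, and the ranks alternate: the rows of rank $2$ occur at every second level and are $1$-periodic, while the intervening rows are horizontal of rank $\tfrac32$. Two consecutive rank-$2$ rows then lie at distance $2$ and bound a flat strip of height $2$ whose middle row has rank $\tfrac32$; these strips tile the plane while sharing their boundaries, exhibiting $D$ as a union of $1$-periodic rank-$2$ strips of height $2$, as asserted. For the final equivalence, the transfer dichotomy states exactly that the rows at distance $1$ from $g$ have rank $2$ when $g$ has rank $\tfrac32$, and have rank $\tfrac32$ (all horizontal, hence not rank $2$) when $g$ has rank $2$; since a $1$-periodic horizontal geodesic has rank $\tfrac32$ or $2$ according as its common direction is or is not $e_1$, this yields the stated ``if and only if''.

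The main obstacle is the transfer step itself: one must fix the orientation conventions carefully enough that the opposite-side directions at the vertices of the up- and down-faces are correctly identified, and then verify that the two resulting $\IF_2$-conditions attached to each edge of row $k+1$ combine to pin down \emph{both} the rank of row $k+1$ and its $1$-periodicity, rather than merely constraining them. Once this bookkeeping is settled the remainder is a routine induction, and it is the reduction ``every face lies between two adjacent rows'' that guarantees no oddness condition has been left unused.
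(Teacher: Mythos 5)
Your proposal is correct and follows essentially the same route as the paper: both reduce oddness of a face to the parity of the number of vertices whose selected direction is parallel to the opposite side, and then propagate 1-periodicity and the alternation of ranks row by row using the up- and down-pointing faces between adjacent rows. Your $\IF_2$ bookkeeping is just a more explicit rendering of the paper's case analysis, and the transfer dichotomy you identify as the crux does check out exactly as you predict.
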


\begin{proof}
Let $\gamma$ be a (horizontal) 1-periodic geodesic. Then, $\gamma$ is either of rank $\frac{3}{2}$, or of rank 2 such that all of its vertices induce parallel roots of rank $\frac{3}{2}$. Let $\gamma'$ be a geodesic at distance 1 from $\gamma$. 

Suppose $\gamma$ is of rank $\frac{3}{2}$, and let $t$ be any triangle with two vertices $v'_1$, $v'_2$ on $\gamma'$ and a single vertex $v$ on $\gamma$. Since triangles adjacent to $t$ with two vertices on $\gamma$ are also odd, the roots of rank $\frac{3}{2}$ at $v'_i$ are non-horizontal. Now, suppose there are two vertices $x$, $y$ at distance 1 from each other on $\gamma'$ with non-parallel roots of rank $\frac{3}{2}$. Then, the triangle $t'$ with vertices $x$, $y$, and a single vertex $z$ on $\gamma$ turns out to be even, a contradiction. This shows that $\gamma'$ is 1-periodic of rank 2. 

Suppose now that $\gamma$ is 1-periodic of rank 2. Consider any triangle $t$ with two vertices on $\gamma$ and a single vertex $v'$ on $\gamma'$. Since $t$ is odd, the root of rank $\frac{3}{2}$ at $v'$ is horizontal. It follows that $\gamma'$ is of rank $\frac{3}{2}$.  

The above implies, by induction, that a root distribution containing a 1-periodic geodesic is a union of 1-periodic rank 2 strips of height 2. 
\end{proof}

\begin{lemma}[2-periodic geodesics]\label{L - 2 periodic geodesic}
Every odd root distribution containing a 2-periodic geodesic is a union of 2-periodic rank 2 strips of height 1. 
\end{lemma}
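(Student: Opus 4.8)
The plan is to mirror the inductive strategy of Lemma \ref{L - 1 periodic geodesic}, but to exploit the different local parity bookkeeping forced by a 2-periodic (rather than 1-periodic) geodesic. Let $\gamma$ be a 2-periodic geodesic in an odd root distribution $D$: by definition its vertex roots of rank $\frac 3 2$ are not all parallel, but roots at distance $2$ along $\gamma$ are parallel. Since at each vertex the three simplicial directions are distributed as one rank $\frac 3 2$ root and two rank $2$ roots, the first step is to pin down, using the oddness of every triangle with two vertices on $\gamma$, exactly how the non-horizontal rank $\frac 3 2$ directions alternate as one walks along $\gamma$. I expect the 2-periodicity to force a strictly alternating pattern of the two non-horizontal directions (left-leaning, right-leaning, left-leaning, \dots), which is precisely what makes the adjacent strip have height $1$ rather than $2$.

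Next I would examine the geodesic $\gamma'$ at distance $1$ from $\gamma$, proceeding exactly as in the previous lemma by counting rank $2$ directions in each triangle $t$ straddling $\gamma$ and $\gamma'$. The key computation is local: a triangle $t$ with two vertices $v_1,v_2$ on $\gamma$ and one vertex $v'$ on $\gamma'$ is odd, so the parity of the number of rank $2$ roots among the three $\tilde t$-directions is odd. Because consecutive vertices of $\gamma$ carry the alternating non-horizontal rank $\frac 3 2$ roots established in the first step, the two directions contributed at $v_1,v_2$ already fix a parity, and oddness then determines whether the direction selected at $v'$ is of rank $2$ or $\frac 3 2$. Carrying this out for the two types of straddling triangles (upward and downward pointing) should show that $\gamma'$ is itself 2-periodic of rank $2$, and that the strip between $\gamma$ and $\gamma'$ has height $1$.

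The main obstacle, and the step deserving the most care, is verifying that the alternation on $\gamma$ is consistent with the alternation forced on $\gamma'$ so that the two boundaries of the height-$1$ strip are compatible 2-periodic rank $2$ geodesics, and that no triangle inside the strip is rendered even. Concretely, one must check both families of unit triangles in the strip simultaneously: it is not enough that each boundary is individually 2-periodic of rank $2$; the phase of the alternation on $\gamma'$ relative to $\gamma$ is constrained, and I would track this phase explicitly to rule out a mismatch that would create an even triangle. Once the phase compatibility is confirmed, the argument closes by induction: having shown that the neighbour $\gamma'$ is again a 2-periodic geodesic satisfying the same hypotheses, the same analysis applies to the next parallel geodesic, and iterating in both transverse directions exhibits $D$ as a union of 2-periodic rank $2$ strips of height $1$, as claimed.
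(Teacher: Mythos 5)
Your outline correctly reproduces the paper's treatment of one of the two cases, but it silently discards the other, and that omission is a genuine gap rather than a detail. A 2-periodic geodesic $\gamma$ has its rank $\frac32$ roots alternating between two non-parallel directions $a,b$ out of the three simplicial directions; nothing in the definition, and nothing forced by oddness, prevents one of $a,b$ from being the horizontal direction (i.e.\ parallel to $\gamma$). Your "expectation" that 2-periodicity forces a strict alternation between the \emph{two non-horizontal} directions is exactly the point at which the second case is lost: the paper splits the proof into (i) $\gamma$ of rank $2$ (both alternating directions non-horizontal) and (ii) $\gamma$ containing a root of rank $\frac32$ (one of the two alternating directions horizontal), and case (ii) genuinely occurs --- indeed the paper shows that in that case every parallel geodesic $\gamma'$ is a copy of $\gamma$, so such distributions exist and are consistent with oddness.

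The omission matters for the conclusion, not just for completeness of the case analysis. In case (ii) the geodesics parallel to $\gamma$ all contain horizontal rank $\frac32$ roots, so none of them is of rank $2$ in your sense, and your inductive scheme --- propagate from $\gamma$ to $\gamma'$, verify $\gamma'$ is again 2-periodic of rank $2$, and stack height-$1$ strips parallel to $\gamma$ --- cannot terminate in the claimed decomposition. The paper resolves this by observing that in case (ii) the resulting distribution is a union of 2-periodic rank $2$ strips of height $1$ taken in a direction \emph{transverse} to $\gamma$. To repair your argument you would need to add this second branch: show that when $\gamma$ carries horizontal rank $\frac32$ roots, oddness of the straddling triangles forces the configuration on $\gamma'$ to replicate $\gamma$, and then exhibit the rank $2$ strips along a transverse family of geodesics. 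Your treatment of case (i), including the phase-compatibility check between $\gamma$ and $\gamma'$, matches the paper's argument and is fine as far as it goes.
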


\begin{proof}
Let $\gamma$ be a (horizontal) 2-periodic geodesic. We have two configurations: either $\gamma$ is of rank 2, or it contains a root of rank $\frac{3}{2}$---and thus, infinitely many, by 2-periodicity. Let $\gamma'$ be a geodesic at distance $1$ from $\gamma$.

Suppose $\gamma$ is of rank 2. Consider a triangle $t$ with two vertices $v_1$, $v_2$ on $\gamma$ and a single vertex $v'$ on $\gamma'$. Since $\gamma$ is 2-periodic of rank 2, the roots of rank $\frac{3}{2}$ at $v_1$ and $v_2$ are non-parallel and non-horizontal. Now, since $t$ is odd, this implies that the root of rank $\frac{3}{2}$ at $v'$ is non-horizontal, and thus either forms an angle of $\frac{2\pi}{3}$ with the horizontal (counterclockwise) or an angle of $\frac{\pi}{3}$. Suppose the former case (the argument for the latter case is essentially the same). Since the distribution is assumed to be odd, the roots of rank $\frac{3}{2}$ at distance 1 and 2 from $v'$ on $\gamma'$ are analytically determined. In particular, the roots at distance 1 are parallel and are transverse to the root of rank $\frac{3}{2}$ at $v'$, and those at distance 2 are parallel to the root of rank $\frac{3}{2}$ at $v'$. It follows, by induction, that  $\gamma'$ is 2-periodic of rank 2, and so that the distribution is a union of 2-periodic strips of rank 2 and height 1. 

Suppose now that $\gamma$ contains a root of rank $\frac{3}{2}$. We may assume, by symmetry, that the non-horizontal roots on $\gamma$ form an angle of $\frac{2\pi}{3}$ with the horizontal. Consider a triangle with two vertices $v_1$, $v_2$ on $\gamma$, and a single vertex $v'$ on $\gamma'$. Since $\gamma$ 2-periodic and contains a root of rank $\frac{3}{2}$, $v_1$ and $v_2$ define either 1 or 2 roots of rank 2 in a large triangle in which $t$ embeds as a centerpiece. We let $t$ be a triangle of the former type. Since $t$ is odd, it follows that the root of rank $\frac{3}{2}$ at $v'$ is horizontal. Now since the distribution is assumed to be odd, the roots of rank $\frac{3}{2}$ at distance $1$ from $v'$ are analytically determined to be parallel to the non-horizontal roots on $\gamma$.  It follows, by induction, that  $\gamma'$ is 2-periodic containing a root of rank $\frac{3}{2}$ (it is in fact a copy of $\gamma$), and so that the distribution is a union of transverse 2-periodic strips of rank 2 and height 1.
\end{proof}

 We shall distinguish three cases  of odd root distributions:

\begin{enumerate}
\item there exists a rank $\frac{3}{2}$ geodesic;
\item there exists a maximal rank $\frac{3}{2}$ half-geodesic;
\item every rank $\frac{3}{2}$ segment is bounded.
\end{enumerate}
The distributions in Case (1) are precisely described by Lemma \ref{L - 1 periodic geodesic}.  The distributions in  Case (2) contain distributions from Lemma \ref{L - 2 periodic geodesic} and a unique additional root distribution (see Lemma \ref{L - unique odd puzzle}). Finally, we shall conclude that Case (3) only contains distributions from Lemma \ref{L - 2 periodic geodesic}.

\begin{lemma}\label{L3}
Every odd root distribution contains a rank $\frac{3}{2}$ segment of length 3. 
\end{lemma}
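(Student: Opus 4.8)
The plan is to turn the oddness condition into a single $\mathbb{F}_2$-linear recurrence for the indicator field of the selected direction, and then to show that this recurrence cannot prevent a long horizontal run from appearing.

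\textbf{Setup and the recurrence.} Coordinatize the lattice by $(m,n)\in\mathbb{Z}^2$, with the three edge directions $a$ (horizontal), $b$, $c$, and set $A(m,n):=[D(m,n)=a]$, $B:=[D=b]$, $C:=[D=c]\in\mathbb{F}_2$, so that $A+B+C=1$ at every vertex. Reading the oddness condition through the large triangle, a lattice triangle is odd exactly when an even number of its vertices have their selected direction parallel to the opposite side. Writing this out for the up-triangle with vertices $(m,n),(m+1,n),(m,n+1)$ gives $A(m,n+1)=B(m+1,n)+C(m,n)$, and for the down-triangle with vertices $(m,n),(m+1,n),(m+1,n-1)$ gives $A(m+1,n-1)=B(m,n)+C(m+1,n)$. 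Substituting $C=1+A+B$ into both, the two right-hand sides become the \emph{same} quantity $1+B(m,n)+B(m+1,n)$; equating the resulting left-hand sides eliminates $B$ and $C$ and yields the closed recurrence
\[
A(m,n+1)\equiv A(m,n)+A(m+1,n)+A(m+1,n-1)\pmod 2.
\]
By the threefold rotational symmetry of the puzzle, $B$ and $C$ satisfy the same recurrence in rotated coordinates.

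\textbf{Reduction.} A rank $\frac32$ segment of length $3$ in the horizontal direction is precisely four consecutive vertices $(m,n),\dots,(m+3,n)$ with $A\equiv 1$, and similarly for the other two directions with $B$, respectively $C$, along their lines. Since the statement is invariant under the rotational symmetry, it suffices to produce a run of four consecutive $1$'s in \emph{any one} of the three fields. So I would assume for contradiction that none of $A,B,C$ contains such a run.

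\textbf{The engine.} The recurrence determines each row $A(\cdot,n+1)$ from the two rows below it, and it is \emph{run-expanding}: an isolated $1$ sitting above an all-$0$ row generates $11$, then $111$, then $1111$ in the three successive rows, and more generally the two endpoints of a maximal block of $1$'s move apart from row to row unless annihilated by collision with a neighbouring block. Hence avoiding a horizontal $4$-run in $A$ forces the blocks of $1$'s into a rigid arrangement in which every potential expansion is cancelled by a collision, which makes $A$ doubly periodic; the same holds in rotated form for $B$ and $C$.

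\textbf{Conclusion and main obstacle.} It then remains to rule out a doubly periodic admissible triple $(A,B,C)$ with $A+B+C=1$ that avoids a $4$-run in all three fields at once. I would finish by a finite computation: the recurrence together with $A+B+C=1$ bounds the period, leaving finitely many candidate patterns, each of which one checks directly to contain four consecutive matching values in some direction. The hard part is exactly this rigidity step—proving that run-expansion leaves no room for an aperiodic field evading $4$-runs, and keeping the ensuing case check finite. This is where the coupling $A+B+C=1$, equivalently the relation $A(m,n+1)+A(m,n)\equiv 1+B(m,n)+B(m+1,n)$ extracted above, does the essential work, since it is what links the separately-propagating fields and prevents them from all staying short simultaneously.
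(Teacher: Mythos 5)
Your $\mathbb{F}_2$ reformulation is correct as far as it goes: the up- and down-triangle parity relations and the closed recurrence $A(m,n+1)=A(m,n)+A(m+1,n)+A(m+1,n-1)$ all check out. But the reduction step misidentifies the target, and this is fatal. A rank $\frac{3}{2}$ segment of length $3$ is the union of two aligned roots at \emph{adjacent} vertices: a root of rank $\frac{3}{2}$ at $v$ is a diameter of the unit disk about $v$, hence a segment of length $2$ centred at $v$, and two such roots at adjacent vertices, both parallel to the edge joining them, sweep out a segment of length $3$. This is exactly how the paper's proof concludes (``two aligned roots of rank $\frac{3}{2}$, and therefore a rank $\frac{3}{2}$ segment of length $3$''), and it matches the convention of Lemma \ref{L4}, where the root configuration of a length-$4$ segment records only its three interior vertices. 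So the lemma asks for a run of \emph{two} consecutive $1$'s in one of $A,B,C$ along its own direction, not four. Worse, the run-of-four statement you set out to prove is false: the remark after Theorem \ref{t - odd root distributions} constructs odd distributions (unions of $2$-periodic rank-$2$ strips of height $1$ with alternating transversal sequence) whose rank-$\frac{3}{2}$ segments have length uniformly bounded by $4$, so no direction field in them contains four consecutive aligned selections, and the contradiction you are waiting for never arrives. Separately, $A+B+C=1$ over $\mathbb{F}_2$ is strictly weaker than ``exactly one of $A,B,C$ equals $1$'', so your closed recurrence admits spurious solutions; and the ``run-expansion implies double periodicity'' and ``finite computation'' steps are asserted, not proved --- you acknowledge yourself that this is where the difficulty lies.

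For the correct target the global machinery is unnecessary. The paper's argument is a two-step local parity check: oddness says each large triangle carries $0$ or $2$ roots of rank $\frac{3}{2}$. If every triangle carried $0$, then at any fixed vertex $v$ the six incident triangles would forbid $D(v)$ from being parallel to each of their opposite sides, and since each of the three directions occurs twice among those opposite sides, all three directions would be forbidden --- absurd. Hence some triangle $xyz$ has two vertices, say $x$ and $y$, with $D(x)$ parallel to $yz$ and $D(y)$ parallel to $xz$. Let $xyp$ be the other triangle on the side $xy$; there $D(x)$ and $D(y)$ contribute rank-$2$ roots, so oddness of $xyp$ forces $D(p)$ to be non-parallel to $xy$, i.e.\ parallel to $xp$ or to $yp$, and in either case it aligns with $D(x)$ along $xp$ or with $D(y)$ along $yp$, giving the two aligned roots. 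If you want to keep your notation, this local argument translates directly into the two triangle relations you already derived; I would recast it that way rather than pursue the recurrence analysis.
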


In particular, a root distribution contains either $s_0$ (defined in Lemma \ref{L4}), or a rank $\frac{3}{2}$ geodesic (in which case Lemma \ref{L - 1 periodic geodesic} applies).

\begin{proof}
Since every triangle is odd, the number of roots of rank 2 in a large triangle is either 1 or 3. If there exists a triangle with a single root of rank 2, the side of one of the two triangles of base the segment associated with the roots of rank $\frac{3}{2}$ contains two aligned roots of rank $\frac{3}{2}$, and threfore a rank $\frac{3}{2}$ segment of length 3. 

It is clear that not every odd triangle contains exactly 3 roots of rank 2 such that no two of the roots of rank $\frac{3}{2}$ are not aligned. 
\end{proof}

\begin{lemma}\label{L4} Every (horizontal) segment $s_0=[a_0,b_0,c_0,d_0,e_0]$ of length 4 with root configuration $(\frac{3}{2},\frac{3}{2},2)$ in an odd root distribution admits a unique extension into a 2-periodic half-strip $S$ of height 4, which transversally alternates the root configurations $(\frac{3}{2},\frac{3}{2},2)$ and $(2,2,\frac{3}{2})$. Furthermore, the roots of rank $\frac{3}{2}$ in $S$ which are not horizontal are parallel.
\end{lemma}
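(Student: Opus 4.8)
The plan is to translate the oddness condition into a mod $2$ recursion on the selected directions and to propagate it one horizontal geodesic at a time. Write $\gamma_0$ for the geodesic carrying $s_0$ and $\gamma_1,\gamma_2,\dots$ for the geodesics above it at distance $1,2,\dots$, and record at each vertex $v$ the selected direction $D(v)\in\{H,A,B\}$, where $H$ is horizontal and $A,B$ are the two transverse directions. The starting observation is purely local: for a lattice triangle $t$, the simplicial direction that the large triangle $\tilde t$ selects at a vertex $x$ is exactly the direction of the edge of $t$ opposite to $x$; hence $t$ is odd precisely when the number of vertices $x$ at which $D(x)$ is parallel to the opposite edge is even. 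Spelling this out for an upward triangle with base $v_iv_{i+1}$ on $\gamma_0$ and apex $w_i$ on $\gamma_1$, and for the complementary downward triangles, yields two identities mod $2$: one expressing $[D(w_i)=H]$ in terms of $D(v_i),D(v_{i+1})$, and one expressing $[D(w_i)=B]$ in terms of $D(w_{i-1})$ and $D(v_i)$.

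First I would read off the horizontality pattern of $\gamma_1$. Up to the reflection exchanging $A$ and $B$ we may take the transverse type on $s_0$ to be $X=A$; the configuration $(\frac{3}{2},\frac{3}{2},2)$ then means that two out of every three vertices of $\gamma_0$ carry $D=H$, the third carrying $D=A$ (inside $s_0$ only the middle vertex $c_0$ is transverse). Feeding this into the upward identity determines, for every apex $w_i$, whether $D(w_i)=H$, and the outcome is the complementary pattern with one horizontal vertex in three, i.e.\ configuration $(2,2,\frac{3}{2})$. This already produces the transverse alternation asserted in the statement.

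Next I would pin down the transverse types on $\gamma_1$ by means of the downward identity. The horizontal vertices act as anchors: wherever $D(w_{i-1})=H$ one has $[D(w_{i-1})=A]=0$, so the recursion for $[D(w_i)=B]$ closes up with no free parameter and every non-horizontal apex is forced to have the single type $X=A$. Hence the extension to $\gamma_1$ is unique, and all of its non-horizontal rank-$\frac{3}{2}$ roots are parallel to those of $\gamma_0$. This ``no-$B$'' feature is exactly what makes the next step repeat cleanly: once a row has every transverse root equal to $A$, the $B$-testing slot of the upward identity vanishes, so $[D(u_i)=H]\equiv[D(w_i)=A]$ and the horizontality density flips back to $(\frac{3}{2},\frac{3}{2},2)$ on $\gamma_2$, again with all transverse roots of type $A$. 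Induction then assembles the transversally $2$-periodic half-strip $S$ of height $4$.

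The main obstacle, and the step I would treat most carefully, is the bookkeeping that secures \emph{uniqueness}: I must verify that on each new geodesic the horizontal anchors genuinely split the type-recursion into independently determined pieces, so that no binary ($A$ versus $B$) choice is ever left free. A secondary point is controlling the shape of the forced region: since the slant always points in the single direction $X$, the determined zone is a half-strip slanting in that direction, and one must check that a base of length $4$ forces exactly the four transverse rows claimed, confirming that the ragged boundary contributes nothing beyond the bulk pattern. Once the two identities mod $2$ are set up correctly, both checks reduce to a finite, if slightly tedious, case analysis over the three residue classes modulo $3$.
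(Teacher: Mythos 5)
Your proposal follows essentially the same route as the paper's proof: both translate the oddness of each upward and downward lattice triangle into a mod-2 constraint on the selected directions (the paper carries this out concretely, triangle by triangle, on the named vertices $a_k,\dots,e_k$, while you package the same checks as two parity recursions) and propagate row by row, showing that each successive row is forced, that the configurations alternate between $(\frac{3}{2},\frac{3}{2},2)$ and $(2,2,\frac{3}{2})$, and that all transverse roots remain parallel, before closing with induction (the paper's ``glider''). One small caveat: ``height 4'' refers to the length of the cross-section $s_0$, so the half-strip is half-infinite in the transverse direction rather than four rows deep, but your inductive step produces exactly that half-infinite slanted strip, so nothing is lost.
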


\begin{proof}
We may assume, by symmetry, that the induced root of rank $\frac{3}{2}$ at $d_0$ forms an angle of $\frac{2\pi}{3}$ with the horizontal (counterclockwise). We show that $s_0$ extends downwards analytically to the left. For every $k\geq 1$ we let  $s_k=[a_k,b_k,c_k,d_k,e_k]$ be the horizontal segment below $s_{k-1}$ and $s_{-k}$ the segment opposite to it (we will adopt this notation for the remainder of the proof). Since the triangle $(c_0,d_0,d_1)$ is odd, the root of rank $\frac{3}{2}$ at $d_1$ is horizontal. Since $(c_0,c_1,d_1)$ and $(b_0,b_1,c_1)$ are odd, the roots of rank $\frac{3}{2}$ at $b_1$ and $c_1$ are parallel to the rank $\frac{3}{2}$ root at $d_0$. In particular, the root configuration of $s_1$ is $(2,2,\frac{3}{2})$.

Since the triangle $(b_1,c_1,c_2)$ is odd, the root at $c_2$ is horizontal. Since both triangles $(c_1,c_2,d_2)$ and $(c_1,d_1,d_2)$ are odd, the rank $\frac{3}{2}$ root at $d_3$ is parallel to the one at $d_0$.

Let us compute the position of the rank $\frac{3}{2}$ root at $b_2$. Since $(c_2,d_2,d_3)$ is odd, the rank $\frac{3}{2}$ root at $d_3$ is horizontal, which implies that the root at $c_3$ is parallel to the root at $d_0$. Now, since the triangles $(b_1,b_2,c_2)$ and $(b_2,c_2,c_3)$ are odd, the root of rank $\frac{3}{2}$  at $b_3$ is horizontal. Thus,  $s_2$ is a segment with root configuration $(\frac{3}{2},\frac{3}{2},2)$. The lemma follows by induction. 

The parallelogram of height 2 with boundaries $s_0$, $s_2$ and inner segment $s_1$ can be viewed as a ``glider'' which forces the construction of $S$ downward analytically.
\end{proof}

\bigskip

The proof of the lemma shows that if the root of rank $\frac{3}{2}$ at $d_0$ forms an angle of $\frac{2\pi}{3}$ with respect to the horizontal, the segment $s_0$ extends analytically in the downward direction. The extension is not analytic in the opposite direction. There are two cases: 
\begin{enumerate}
\item[(a)] $S$ can be extended into a 2-periodic bi-infinite strip of height 4, transversally alternating the root configurations $(\frac{3}{2},\frac{3}{2},2)$ and $(2,2,\frac{3}{2})$.
\item[(b)] $S$ admits a maximal extension into a periodic half-strip $S$ of height 4 with boundary $s_0$,  transversally alternating the root configurations $(\frac{3}{2},\frac{3}{2},2)$ and $(2,2,\frac{3}{2})$.
\end{enumerate}

Every root distribution in case (a) contains, by definition, a 2-periodic geodesic and therefore Lemma \ref{L - 2 periodic geodesic} applies. It remains to see what happens in case (b).

\bigskip

Let $t_0$ be the trapezoid of height $1$ and base $s_0$, in which the root of rank $\frac{3}{2}$ at $d_0$ forms an angle of $\frac{2\pi}{3}$ with respect to the horizontal (counterclockwise), and such that the root of rank $\frac{3}{2}$ at $c_{-1}$ forms an angle of $\frac{\pi}{3}$. The presence of $t_0$  breaks the 2-periodicity in the upward extension of $S$.  

\begin{lemma}
Every odd root distribution in case (b) contains $t_0$.
\end{lemma}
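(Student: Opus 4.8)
The plan is to analyze, under the hypothesis of case (b), the single row of triangles sitting immediately above the top boundary $s_0$ of the maximal half-strip $S$, and to show that the parity constraints leave only two admissible completions of that row: one which prolongs the $2$-periodic pattern upward, and one which is exactly $t_0$. Since case (b) is by definition the situation in which $S$ cannot be prolonged, only the second completion can occur.

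More precisely, write $s_{-1}$ for the horizontal segment lying directly above $s_0$, and consider the trapezoid of height $1$ between $s_0$ and $s_{-1}$. By construction $s_0$ has root configuration $(\frac{3}{2},\frac{3}{2},2)$ with the non-horizontal rank-$\frac{3}{2}$ root at $d_0$ oriented at angle $\frac{2\pi}{3}$, so the roots at every vertex of $s_0$ are known. I would then feed the oddness condition through each triangle of this trapezoid in turn: every such triangle has two vertices on $s_0$, whose roots are fixed, and a single vertex on $s_{-1}$, so its parity determines, or strongly restricts, the root at that $s_{-1}$-vertex. Carrying this bookkeeping out along the row, I expect to find that the roots at all vertices of $s_{-1}$ are forced except at the single vertex $c_{-1}$, where oddness excludes the horizontal orientation but permits either of the two non-horizontal orientations $\frac{\pi}{3}$ and $\frac{2\pi}{3}$.

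The two surviving orientations at $c_{-1}$ correspond to the dichotomy in the statement. If $c_{-1}$ is oriented at $\frac{2\pi}{3}$, then $s_{-1}$ acquires configuration $(2,2,\frac{3}{2})$ with all of its non-horizontal roots parallel to the root at $d_0$, so that the pair $(s_{-1},s_0)$ is again a glider of the type used in Lemma \ref{L4}. Running the analytic forcing of Lemma \ref{L4} in the upward direction then prolongs $S$ into a genuine $2$-periodic strip above $s_0$, which is precisely case (a) and contradicts the maximality of $S$ assumed in case (b). Hence, in case (b), the root at $c_{-1}$ must be oriented at $\frac{\pi}{3}$; together with the root at $d_0$ oriented at $\frac{2\pi}{3}$, this is exactly the defining configuration of the trapezoid $t_0$, and therefore the distribution contains $t_0$.

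The main obstacle I anticipate is the parity bookkeeping of the intermediate step: one must verify carefully that the successive triangles of the trapezoid genuinely pin down every vertex of $s_{-1}$ except $c_{-1}$, and in particular that the horizontal orientation at $c_{-1}$ is truly excluded by the parity of its adjacent triangles rather than merely being non-periodic. A secondary point requiring care is to confirm that the $\frac{2\pi}{3}$ branch really reproduces a glider satisfying the hypotheses of Lemma \ref{L4}, so that the upward forcing is legitimately analytic and lands us in case (a); once these two finite local verifications are in place, the global conclusion follows at once from the maximality of $S$.
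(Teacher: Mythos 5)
Your overall strategy---examine the rows above $s_0$, use oddness to pin down roots, and use maximality of $S$ to exclude the branch that would prolong the periodic pattern---is the same as the paper's, but your execution stops one row too early, and the step you yourself flag as a ``secondary point requiring care'' is exactly where the argument breaks. You claim that if the root at $c_{-1}$ is oriented at $\frac{2\pi}{3}$ then the pair $(s_{-1},s_0)$ is a glider in the sense of Lemma \ref{L4}, so that the analytic forcing runs upward, extends $S$, and contradicts maximality. This is not available: the forcing in Lemma \ref{L4} is explicitly one-directional (the remark following it states that the extension is \emph{not} analytic in the direction opposite to the one singled out by the orientation of the root at $d_0$), and the glider there is the height-$2$ parallelogram on $s_0$, $s_1$, $s_2$, not a single additional row. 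A lone row $s_{-1}$ with configuration $(2,2,\frac{3}{2})$ sitting above $s_0$ does not force the next row $s_{-2}$, and in particular does not place you in case (a).

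The paper's proof shows concretely what happens in the $\frac{2\pi}{3}$ branch at $c_{-1}$: oddness determines the roots at $b_{-1}$ (parallel to the root at $d_0$) and at $b_{-2}$ (horizontal), but a genuine binary choice then survives at $c_{-2}$. One option there is a horizontal root, which would force $s_{-1}$ and $s_{-2}$ to have configurations $(2,2,\frac{3}{2})$ and $(\frac{3}{2},\frac{3}{2},2)$ and hence extend the periodic half-strip; only \emph{that} option is killed by maximality. The other option produces a copy of $t_0$ one level higher, which is perfectly consistent with the maximality of $S$ and still yields the lemma. So the dichotomy does not resolve at $c_{-1}$ as you assert, and the conclusion your argument would deliver---that the root at $c_{-1}$ must be at $\frac{\pi}{3}$, i.e., that $t_0$ sits immediately on $s_0$---is strictly stronger than the lemma and is not actually forced. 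To repair the proof you must carry the parity bookkeeping to the second row above $s_0$ and invoke maximality there, which is what the paper does.
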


\begin{proof}
Let us denote again by $s_0=[a_0,b_0,c_0,d_0,e_0]$ the (horizontal) segment with root configuration $(\frac{3}{2},\frac{3}{2},2)$, and let $s_{-1}=[a_{-1},b_{-1},c_{-1},d_{-1},e_{-1}]$ be the segment above it in the direction opposite to $S$. By parity, there are two choices for the position of the root of rank $\frac{3}{2}$ at $c_{-1}$. One of the two choices gives $t_0$, the other gives a root forming an angle of $\frac{2\pi}{3}$  with respect to the horizontal. Since the triangles $(b_{-1},c_{-1},c_0)$ and $(b_0,c_0,b_{-1})$ are odd, the root of rank $\frac{3}{2}$ at $b_{-1}$ is parallel to the root at $d_0$. We let $s_{-2}=[a_{-2},b_{-2},c_{-2},d_{-2},e_{-2}]$ be the segment above $s_{-1}$ in the direction opposite to $S$. Since the triangle $(b_{-2},b_{-1},c_{-1})$ is odd, the root of rank $\frac{3}{2}$ at $b_{-2}$ is horizontal. Now, there are two choices for the position of the root of rank $\frac{3}{2}$ at $c_{-2}$. One of the two choices gives $t_0$, the other one is a horizontal root of rank $\frac{3}{2}$ which fails to happen by maximality of $S$. Indeed, the root at $d_{-1}$ would be horizontal, and therefore the root at $d_{-2}$  would  form an angle of $\frac{2\pi}{3}$  with respect to the horizontal. This would imply that the segments $s_{-1}$ and $s_{-2}$ have root configurations $(2,2,\frac{3}{2})$  and $(\frac{3}{2},\frac{3}{2},2)$ respectively, thus extending $S$. 
\end{proof}

\begin{lemma}\label{L - unique odd puzzle}
$t_0$ belongs to a unique odd root distribution $D_{0}$. 
\end{lemma}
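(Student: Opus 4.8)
The plan is to show that the trapezoid $t_0$, which by the preceding lemma breaks the $2$-periodicity of the downward strip $S$, forces the entire root distribution to be determined uniquely. The strategy mirrors the ``glider'' argument of Lemma~\ref{L4}: once $t_0$ is present, I expect the parity condition (every lattice triangle is odd) to propagate the configuration deterministically in all directions, with no free choices remaining. The main point is that $t_0$ contains both a root of rank $\frac{3}{2}$ at $c_{-1}$ forming an angle of $\frac{\pi}{3}$ and one at $d_0$ forming an angle of $\frac{2\pi}{3}$; these two \emph{transverse} non-horizontal roots, sitting on adjacent vertices, over-determine their neighbours once oddness is imposed.

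First I would fix the notation from the previous lemmas, writing $s_0=[a_0,b_0,c_0,d_0,e_0]$ for the base segment with configuration $(\frac{3}{2},\frac{3}{2},2)$ and recalling that $S$ extends analytically downward by Lemma~\ref{L4}. Below $s_0$, the distribution $D_0$ is already forced to be the $2$-periodic half-strip of height $4$ alternating $(\frac{3}{2},\frac{3}{2},2)$ and $(2,2,\frac{3}{2})$, with all non-horizontal rank $\frac{3}{2}$ roots parallel. So the content is entirely in the upward direction, above $s_0$. I would then analyze the segment $s_{-1}$ containing the $\frac{\pi}{3}$-root of $t_0$ at $c_{-1}$: since the triangles $(b_{-1},c_{-1},c_0)$, $(b_0,c_0,b_{-1})$, and their neighbours are all odd, each successive vertex root on $s_{-1}$ (and then on $s_{-2}$, etc.) is analytically determined with \emph{no} remaining parity ambiguity—precisely because $t_0$ pins down \emph{two} independent transverse directions rather than one. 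I expect this to show that the upward extension is also $2$-periodic (of the same transverse type as $S$ but reflected), but now \emph{globally forced} rather than admitting the two-fold choice that appeared in case~(a) versus case~(b).

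The key step—and the main obstacle—is verifying that the upward propagation never again encounters a vertex at which parity permits two distinct rank $\frac{3}{2}$ roots. In Lemma~\ref{L4}'s glider, a parallelogram of height $2$ forced the downward extension; here I would isolate the analogous rigid block built from $t_0$ (a height-$2$ or height-$3$ piece containing the two transverse roots) and check, triangle by triangle, that imposing oddness on the triangle straddling any boundary segment forces the next layer uniquely. The delicate point is the behaviour at the ``seam'' where the upward-forced and downward-forced strips meet along $s_0$: I must confirm these are \emph{compatible}, i.e.\ that the already-fixed configuration of $s_0$ and $s_1$ is consistent with the configuration forced from above through $t_0$, so that no contradiction arises and exactly one distribution $D_0$ survives. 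Establishing this compatibility, together with the absence of any binary choice at each layer, is what upgrades ``$t_0$ belongs to \emph{some} odd distribution'' to ``$t_0$ belongs to a \emph{unique} odd distribution.''

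Finally, I would assemble the local propagation results into a global induction: having shown each horizontal segment $s_k$ (for all $k\in\IZ$) is determined by its neighbours and that the seam at $s_0$ is consistent, I conclude by the usual induction on $|k|$ that every vertex root of $D_0$ is uniquely specified, hence $D_0$ is unique. I would close by remarking that $D_0$ is exactly the exceptional distribution anticipated in Case~(2) of the trichotomy, the one \emph{not} arising from Lemma~\ref{L - 2 periodic geodesic}, thereby slotting this lemma into the larger classification.
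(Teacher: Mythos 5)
Your proposal is a plan rather than a proof, and the step you yourself flag as ``the main obstacle'' is exactly where a different idea is needed. You predict that the upward extension above $s_0$ is ``also $2$-periodic (of the same transverse type as $S$ but reflected)'' and then propose a horizontal, layer-by-layer induction on the segments $s_{-k}$. This picture of $D_0$ is not correct: the defining feature of case (b) is that the $2$-periodic horizontal structure does \emph{not} persist above $s_0$ (that is what the maximality of $S$ means), and the region above $s_0$ in $D_0$ is in fact organized around a maximal rank $\frac{3}{2}$ half-geodesic and covered by two half-strips running in the two \emph{non-horizontal} lattice directions. In addition, even granting your picture, the segments $s_{-k}$ have length $4$, so your induction only controls a vertical column of bounded width; you never explain how the determination propagates horizontally along each infinite line, so ``every vertex root is uniquely specified'' does not follow from what you set up.

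The idea the paper uses, and which is missing from your plan, is that the oddness of the triangles surrounding $t_0$ produces two \emph{new} copies of the configuration $s_0$ (the root pattern $(\frac{3}{2},\frac{3}{2},2)$ on a length-$4$ segment) in the two other simplicial directions: one read off from the roots forced at $d_{-1}$, $e_0$, $e_1$, the other from those forced at $b_{-1}$, $b_{-2}$, $c_{-2}$. Lemma \ref{L4} then applies verbatim to each of these copies, yielding two further analytically forced half-strips $S'$ and $S''$, transverse to $S$ and to each other; the three half-strips fan out at mutual angles of $\frac{2\pi}{3}$, and by convexity the distribution on the sectors between them is forced, which is what gives uniqueness. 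So the correct mechanism is a three-fold reuse of Lemma \ref{L4} in the three lattice directions, not a vertical induction through horizontal layers; without locating the two extra copies of $s_0$, your propagation stalls precisely at the vertices where you concede parity might still permit two choices.
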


\begin{proof}
Suppose we are given a copy of $t_0$ (which we assume is horizontal). We write again $s_0=[a_0,b_0,c_0,d_0,e_0]$ for the base of $t_0$ and $S$ for the strip in which $s_0$ extends downward analytically. For $k \geq 1$, we let as before $s_k=[a_k,b_k,c_k,d_k,e_k]$ be the segment  adjacent to $s_{k-1}$ included in $S$, and $s_{-k}$ be the adjacent segment opposite to $S$.  By assumption, the root at $c_{-1}$ forms an angle of $\frac{\pi}{3}$ with the horizontal (counterclockwise). Since the triangle $(c_{-1},d_0,d_{-1})$ is odd, the root of rank $\frac{3}{2}$ at $d_{-1}$ is parallel to the root of rank $\frac{3}{2}$ at $d_0$.  Similarly, since the triangle $(d_{-1},d_0,e_{0})$ is odd, the root of rank $\frac{3}{2}$ at $e_{0}$ is parallel to the root at $c_{-1}$.  
Furthermore, the root of rank $\frac{3}{2}$ at $d_1$ is horizontal by Lemma \ref{L4}. Since the triangles $(d_0,d_1,e_1)$ and $(d_0,e_0,e_1)$ are odd, the root of rank $\frac{3}{2}$ at  $e_1$ is parallel to the one at $d_0$. Thus, we get a copy of $s_0$ and so a transverse second copy $S'$ of the half-strip $S$ induced by the base of $t_0$. 

Since the triangle $(c_0,c_{-1},b_{-1})$ is odd, the root of rank $\frac{3}{2}$ at $b_{-1}$ is parallel to the one at $c_{-1}$. Similarly, since the triangle $(b_{-1},c_{-1},b_{-2})$ is odd, the root of rank $\frac{3}{2}$ at $b_{-2}$ is horizontal. Now, since the triangles $(c_{-1},b_{-2},c_{-2})$ and $(c_{-1},d_{-1},c_{-2})$ are odd, the root of rank $\frac{3}{2}$ at $c_{-2}$ is parallel to the one at $c_{-1}$. Therefore, we have another copy of $s_0$ and so a third copy $S''$ of $S$ transverse to $S$ and $S'$. 

By convexity, it follows that the trapezoid $t_0$ belongs to a unique root distribution, which we call $D_{0}$.
\end{proof}

Observe that $D_{0}$ contains a maximal rank $\frac{3}{2}$ half-geodesic in the upward extension of $S$, and so distributions in Case (3) all contain a 2-periodic geodesic. 

\bigskip

\begin{figure}[!h]
\begin{tikzpicture}[scale=0.75]
    \draw [line width=0.2mm] (180:0) -- +(240:3.5);
    \draw [line width=0.2mm] (180:1.5) -- +(240:3.5);
    \draw [line width=0.2mm] (300:0.75)+(180:0.375) -- +(0:3.5);
    \draw [line width=0.2mm] (120:0.75)+(180:0.375) -- +(0:3.5);
    \draw [line width=0.2mm] (180:1.125)+(300:0.375) -- +(120:3.5);
    \draw [line width=0.2mm] (90:0.9742785792)+(180:0.1875) -- +(120:3.5);
    \draw [dashed] (180:0.75) -- +(60:4.5);
\end{tikzpicture}
\caption{The three strips $S$, $S'$, and $S''$, along with the (dashed) maximal rank $\frac{3}{2}$ half-geodesic in the upward extension of $S$ (in the special distribution $D_{0}$)} 
\end{figure}
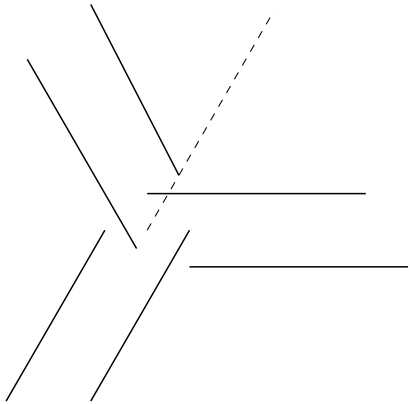

\begin{theorem}\label{t - odd root distributions}
An odd root distribution is either a union of 1-periodic rank 2 strips of height 2, or a union of 2-periodic rank 2 strips of height 1, or it is the special distribution  $D_{0}$. 
\end{theorem}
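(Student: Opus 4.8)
The plan is to assemble the preceding lemmas into a single case analysis, organized by the largest rank $\frac{3}{2}$ segment present in a given odd root distribution $D$. First I would record, using Lemma \ref{L3} together with the remark following it, that $D$ necessarily contains \emph{either} a bi-infinite rank $\frac{3}{2}$ geodesic \emph{or} a copy of the segment $s_0$ with root configuration $(\frac{3}{2},\frac{3}{2},2)$. This dichotomy is the backbone of the argument: it guarantees a nonempty rank $\frac{3}{2}$ structure to build from (no distribution is rank $2$ everywhere) and it immediately separates the ``periodic geodesic'' regime from the ``$s_0$'' regime.

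In the first branch, suppose $D$ contains a rank $\frac{3}{2}$ geodesic $\gamma$. Since every vertex root along $\gamma$ is horizontal, all of these roots are parallel, so $\gamma$ is a $1$-periodic geodesic in the sense of the definitions above. Lemma \ref{L - 1 periodic geodesic} then applies directly and yields that $D$ is a union of $1$-periodic rank $2$ strips of height $2$, which is the first asserted family.

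In the second branch, suppose $D$ contains no rank $\frac{3}{2}$ geodesic; then by the remark after Lemma \ref{L3} the distribution $D$ contains a copy of $s_0$. I would invoke Lemma \ref{L4} to extend $s_0$ downward into the canonical $2$-periodic half-strip $S$ of height $4$, and then use the exhaustive dichotomy (a)/(b) recorded just after that lemma for the (non-analytic) upward extension. In case (a), $S$ completes to a bi-infinite $2$-periodic strip, whose boundary is a $2$-periodic geodesic; Lemma \ref{L - 2 periodic geodesic} then shows $D$ is a union of $2$-periodic rank $2$ strips of height $1$, the second family. In case (b), the upward extension is a maximal half-strip, so by the lemma immediately preceding Lemma \ref{L - unique odd puzzle} the distribution $D$ contains the trapezoid $t_0$, and the uniqueness in Lemma \ref{L - unique odd puzzle} forces $D=D_0$, the third family. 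As the three branches are mutually exclusive and jointly exhaustive, this exhibits exactly the three stated families and completes the proof.

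The step I expect to require the most care is verifying that sub-cases (a) and (b) in the second branch are genuinely the only possibilities and that each matches its stated output. Concretely, one must check that case (a) produces a $2$-periodic (rather than $1$-periodic) geodesic on the strip boundary, so that it is Lemma \ref{L - 2 periodic geodesic} and not Lemma \ref{L - 1 periodic geodesic} that governs it; here the alternating configuration $(\frac{3}{2},\frac{3}{2},2)$/$(2,2,\frac{3}{2})$ of $S$ shows the boundary roots are not all parallel, ruling out $1$-periodicity. One must also confirm that case (b) cannot secretly harbor a rank $\frac{3}{2}$ geodesic, but the maximality of $S$ only ever yields a half-geodesic, consistent with $D_0$. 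All of the genuinely analytic content—the forced downward extension, the appearance of $t_0$, and the uniqueness of $D_0$—has already been carried out in Lemma \ref{L4}, the case-(b) lemma, and Lemma \ref{L - unique odd puzzle}, so the theorem itself reduces to checking that these branches cover every odd root distribution without overlap.
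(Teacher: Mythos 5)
Your proposal is correct and follows essentially the same route as the paper's own proof: Lemma \ref{L3} gives the dichotomy between a rank $\frac{3}{2}$ geodesic (handled by Lemma \ref{L - 1 periodic geodesic}) and a copy of $s_0$, which via Lemma \ref{L4} splits into the bi-infinite case (a) (handled by Lemma \ref{L - 2 periodic geodesic}) and the maximal half-strip case (b) (forcing $t_0$ and hence $D_0$ by Lemma \ref{L - unique odd puzzle}). Your additional checks on the exclusivity and the $2$-periodicity of the case (a) boundary are sensible but not beyond what the paper already records.
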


\begin{proof}
It follows by  Lemma \ref{L3} that an odd distribution $D$ contains a rank $\frac{3}{2}$ segment of length 3. In particular, it either contains a rank $\frac{3}{2}$ geodesic, or contains $s_0$ (defined in Lemma \ref{L4}). If $D$ contains a rank $\frac{3}{2}$ geodesic, then $D$ is a union of 1-periodic rank 2 strips of height 2 by Lemma \ref{L - 1 periodic geodesic}. If it does not, it contains $s_0$ and, by Lemma \ref{L4}, $D$ either contains a 2-periodic geodesic, in which case it is a union of 2-periodic rank 2 strips of height 1 by Lemma \ref{L - 2 periodic geodesic} (case (a)), or $D$ is the special distribution $D_{0}$ of Lemma \ref{L - unique odd puzzle} (case (b)). 
\end{proof}

\begin{remark}
(a) Given an odd distribution containing a 2-periodic rank 2 geodesic $\gamma$, it follows by the proof of Lemma \ref{L - 2 periodic geodesic} that any geodesic $\gamma'$ at distance $1$ from $\gamma$ is again 2-periodic of rank 2. In particular, its roots of rank $\frac{3}{2}$ are determined by the position of the root of rank $\frac{3}{2}$ at any given vertex on $\gamma'$ (it is, by parity, one of two possible positions). If $\eta$ is a geodesic transverse to $\gamma$, we can encode the position of the roots in a sequence $(x_n)_{n \in \mathbb{Z}}$ where $x_n$ indicates the chosen direction at vertex at distance $n\in \mathbb{Z}$ from the intersection point on $\gamma$. Aligning these roots suitably, we can in particular use 2-periodic rank 2 strips of height 1 to  construct an odd distribution with a full geodesic of rank $\frac{3}{2}$ as in Case (1),  one with a maximal rank $\frac{3}{2}$ half-geodesic as in Case (2), or one such that segments of rank $\frac{3}{2}$ have lengths uniformly bounded by a given constant $k >3$ as in Case (3) (this corresponds to letting $(x_n)_{n \in \mathbb{Z}}$ have no $k-2$ consecutive $0$s or $1$s).  Similarly, for a distribution obtained at a union of 1-periodic rank 2 strips of height 2, one may align all rank $\frac 32$ roots on every rank 2 geodesics, and thus write this distribution as a union of 2-periodic rank 2 strips (in the third direction) of height 1. Clearly, the special distribution $D_0$ is distinct from any distribution of the first two types.

(b) In \cite{Pauli} we studied a family of Pauli puzzles, associated with a Moebius--Kantor complex (the ``Pauli complex'') and classified them. It is not difficult to prove that the Pauli complex is even, and correspondingly, the Pauli puzzles are even (i.e., for any triangle, the number of roots of rank 2 associated with a large triangle is even). It is shown in \cite{root} that there exists a unique even Moebius--Kantor complex, and these results also establish that there is an explicit classification of all even root distributions similar to the odd case.
\end{remark}

\section{Proof of Theorem \ref{T - main theorem}}

In view of Theorem \ref{t - odd root distributions}, the classification of odd puzzles reduces to the classification of puzzles having one of three admissible families of root distributions. 

We shall begin with the simplest case, in which the root distribution is a union of 1-periodic rank 2 strips of height 2 (case 1 in Theorem \ref{t - odd root distributions}).  For these puzzles, the center geodesic in each one of these strips is of rank $\frac 3 2$, and therefore it suffices to classify these strips. 

 We start with a vertex $x$ in the center of such a strip having as ring the first ring on the left, for $s=0$ (Fig.\ \ref{Fig 2}), positioned as follows:
 
 \begin{figure}[H]
 \begin{tikzpicture}[scale=.8]

\hexagon{1}{0}{2}{0}{1}{2}{0}{1}

  \draw node at (1,-.3) {$x$};
    
      \draw node at (2,-.3) {$y$};
    \end{tikzpicture}

\end{figure}

In this figure, the root of rank $\frac 3 2 $ at $x$ is horizontal. By Lemma \ref{L - edge labelling}, vertex to the right of this vertex is associated with $s=1$. We remark:

\begin{lemma}\label{L - unique extension}
There exists a unique position for a ring in $\Theta_1$ (for $s=1$) with the property that the root of rank $\frac 3 2 $ at $y$ is horizontal. 
\end{lemma}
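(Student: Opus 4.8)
The plan is to reduce the statement to a short finite verification, once the data forced by the ring at $x$ have been recorded. Reading off the ring placed at $x$ (the first ring of $\Theta_0$, in the indicated position), the face lying above the edge $xy$ carries the interior label $2$ and the face lying below carries the label $1$; equivalently, in the link $L_y$ the arc immediately above the leftward horizontal edge must be labeled $2$ and the arc immediately below it must be labeled $1$. By Lemma~\ref{L - edge labelling}, the vertex $y$ is adjacent to edges labeled $1$ and $2$, so its ring belongs to $\Theta_1$, and there are exactly three such rings to consider.

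The key preliminary step is to note that the edge--labeling of $L_y$ is already completely determined by Lemma~\ref{L - edge labelling}: the six edges around $y$ carry the labels $1$ and $2$ in the alternating pattern common to every ring of $\Theta_1$. Consequently a ring of $\Theta_1$ can be placed at $y$ only through a label--preserving isomorphism that preserves this alternating edge--pattern, i.e.\ through one of the six symmetries fixing it (the three rotations by multiples of $\tfrac{2\pi}{3}$, together with the three reflections in the axes through opposite edges). Imposing in addition that the rank $\tfrac{3}{2}$ root at $y$ be horizontal forces the distinguished rank $\tfrac{3}{2}$ axis of the ring---which is the horizontal axis in the standard drawing of Fig.~\ref{Fig 2}---to map to the horizontal axis of $L_y$. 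Among the six edge--preserving symmetries, only the identity and the reflection across the horizontal axis fix this axis, since the two nontrivial rotations and the two remaining reflections carry it onto one of the other two simplicial directions. Hence at most two placements survive for each of the three rings.

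It then remains to test the three rings of $\Theta_1$ in these two orientations against the constraint that the upper left arc be $2$ and the lower left arc be $1$. A direct inspection of Fig.~\ref{Fig 2} (for $s=1$) shows that in the identity orientation no ring realizes the pair $(2,1)$ at the two left arcs, whereas under the horizontal reflection only the second ring does so, the first and third rings yielding $(0,2)$ and $(0,1)$ respectively. This leaves exactly one admissible placement---the second ring of $\Theta_1$, reflected across the horizontal axis---establishing both existence and uniqueness. The one genuine subtlety, and the step I expect to be most error--prone, is the orientation bookkeeping of the previous paragraph: if one overlooks that the fixed edge--labeling already excludes the vertical reflection and the half--turn, the second ring appears to admit a spurious second horizontal placement and uniqueness fails; it is precisely the combination of the edge constraint with the two fixed arc labels that forces a single answer.
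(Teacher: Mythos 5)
Your proposal is correct and follows essentially the same route as the paper: the paper's proof simply records the forced boundary word $^2 1^1 2^2$ at $y$ and inspects the three rings of $\Theta_1$ under the horizontality constraint, which is exactly your finite check (and your answer---the second ring of $\Theta_1$ in the reflected orientation---agrees with the configuration in the paper's figure). Your only addition is to make the enumeration of admissible placements explicit via the symmetries preserving the alternating edge labels, which the paper leaves implicit in the word ``inspecting''.
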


\begin{proof}
We indicate the edge labeling with exponents as in \ts\ref{S - Meso}. We are required to find the word $^2 1^1 2^2$ and extend to the right positioning the root of rank $\frac 3 2$ horizontally.

\begin{figure}[H]
 \begin{tikzpicture}[scale=.8]
 
     \draw node at (1.7,-.1) {\tiny 1};
     \draw node at (1.8,.5) {\tiny 2};
     \draw node at (1.8,-.5) {\tiny 2};
     
\hexagon{1}{0}{2}{0}{1}{2}{0}{1}
\hexagon{2}{0}{2}{0}{}{}{2}{1}
\end{tikzpicture}
\caption{}\label{Fig - unique extension 32}
\end{figure}
\noindent Inspecting the three rings in $\Theta_1$, there is a unique solution as shown in the figure above.
\end{proof}

We should now extend Fig.\  \ref{Fig - unique extension 32} to the right. Note that the vertex ring is now an element of $\Theta_2$ corresponding to $s=2$. It is easily verified that the analog of Lemma \ref{L - unique extension} holds. By induction, this gives a period 6 strip of height 2 as shown in the introduction. At each step, it is easily verified that an analog of Lemma \ref{L - unique extension} holds for the given configuration. Note that this  strip makes use of three of the nine rings, and the three strips shown in the introduction exhaust all possible rings. This completes the proof of Theorem \ref{T - main theorem}, Case (2).

We  now consider Case (1). It follows by Theorem \ref{t - odd root distributions} that the roots distributions are 2-periodic. Here the strips have height 1, and there are two positions on both sides for the roots (which are non-horizontal). This gives four initial positions in total, which can be represented as follows:

\begin{figure}[H]
\begin{tikzpicture}[scale=.7]

\begin{scope}[shift={(0,0)}]
\coordinate (O) at (0.0,0.0);
\coordinate (v1) at (1.0,0.0);
\coordinate (v2) at (0.5,0.87);
\coordinate (v3) at (-0.5,0.87);
\coordinate (v4) at (-1.0,-0.0);
\coordinate (v5) at (-0.5,-0.87);
\coordinate (v6) at (0.5,-0.87);
\draw[solid,thin,color=black] (O) -- (v1) -- (v2) -- (O);

\draw[solid,very thick,rotate=60,shift=+(O)] (-0.2,0) -- (.2,0);
\draw[solid,very thick,rotate=120,shift=+(v1)] (-0.2,0) -- (.2,0);
\draw[solid,very thick,rotate=60,shift=+(v2)] (-0.2,0) -- (.2,0);

  \draw node at (1,-.5) {$x$};

\end{scope}
\begin{scope}[shift={(3,0)}]
\coordinate (O) at (0.0,0.0);
\coordinate (v1) at (1.0,0.0);
\coordinate (v2) at (0.5,0.87);
\coordinate (v3) at (-0.5,0.87);
\coordinate (v4) at (-1.0,-0.0);
\coordinate (v5) at (-0.5,-0.87);
\coordinate (v6) at (0.5,-0.87);
\draw[solid,thin,color=black] (O) -- (v1) -- (v2) -- (O);

\draw[solid,very thick,rotate=60,shift=+(O)] (-0.2,0) -- (.2,0);
\draw[solid,very thick,rotate=120,shift=+(v1)] (-0.2,0) -- (.2,0);
\draw[solid,very thick,rotate=120,shift=+(v2)] (-0.2,0) -- (.2,0);

  \draw node at (1,-.5) {$x$};

\end{scope}
\begin{scope}[shift={(6,0)}]
\coordinate (O) at (0.0,0.0);
\coordinate (v1) at (1.0,0.0);
\coordinate (v2) at (0.5,0.87);
\coordinate (v3) at (-0.5,0.87);
\coordinate (v4) at (-1.0,-0.0);
\coordinate (v5) at (-0.5,-0.87);
\coordinate (v6) at (0.5,-0.87);
\draw[solid,thin,color=black] (O) -- (v1) -- (v2) -- (O);

\draw[solid,very thick,rotate=120,shift=+(O)] (-0.2,0) -- (.2,0);
\draw[solid,very thick,rotate=60,shift=+(v1)] (-0.2,0) -- (.2,0);
\draw[solid,very thick,rotate=60,shift=+(v2)] (-0.2,0) -- (.2,0);

  \draw node at (1,-.5) {$x$};

\end{scope}
\begin{scope}[shift={(9,0)}]
\coordinate (O) at (0.0,0.0);
\coordinate (v1) at (1.0,0.0);
\coordinate (v2) at (0.5,0.87);
\coordinate (v3) at (-0.5,0.87);
\coordinate (v4) at (-1.0,-0.0);
\coordinate (v5) at (-0.5,-0.87);
\coordinate (v6) at (0.5,-0.87);
\draw[solid,thin,color=black] (O) -- (v1) -- (v2) -- (O);

\draw[solid,very thick,rotate=120,shift=+(O)] (-0.2,0) -- (.2,0);
\draw[solid,very thick,rotate=60,shift=+(v1)] (-0.2,0) -- (.2,0);
\draw[solid,very thick,rotate=120,shift=+(v2)] (-0.2,0) -- (.2,0);

  \draw node at (1,-.5) {$x$};

\end{scope}

\end{tikzpicture}
\caption{}\label{F - root distribution cases}
\end{figure}

Note that the root distribution must extend uniquely in the horizontal direction, starting from the initial configuration, in order to respect 2-periodicity. We assume as above that the ring at $x$ belongs to $\Theta_0$ with an horizontal edge labelled 0. For each case, we have, in principle, six possible initial labels (three on the triangle and two adjacent to it)  for a 1-strip, which gives 24 possibilities in total; however, each 1-strip contributes to several of these labelling---namely, four each, as we shall see now, for a total of six strips.   

Starting with the left hand side configuration in Fig. \ref{F - root distribution cases},  the following are the initial marking on a 1-strip:

\begin{figure}[H]
\begin{tikzpicture}[scale=.7]

\begin{scope}[shift={(0,0)}]
\coordinate (O) at (0.0,0.0);
\coordinate (v1) at (1.0,0.0);
\coordinate (v2) at (0.5,0.87);

\draw[solid,very thick,rotate=60,shift=+(O)] (-0.2,0) -- (.2,0);
\draw[solid,very thick,rotate=120,shift=+(v1)] (-0.2,0) -- (.2,0);
\draw[solid,very thick,rotate=60,shift=+(v2)] (-0.2,0) -- (.2,0);

  \draw node at (1,-.5) {$x$};
     \draw node at (.5,-.1) {\tiny 0};
     \draw node at (.8,.5) {\tiny 1};
     \draw node at (0.2,.5) {\tiny 2};
\up{0}{0}{0}
\down{1}{0}{1}
\end{scope}

\begin{scope}[shift={(2,0)}]
\coordinate (O) at (0.0,0.0);
\coordinate (v1) at (1.0,0.0);
\coordinate (v2) at (0.5,0.87);
\coordinate (v3) at (-0.5,0.87);
\coordinate (v4) at (-1.0,-0.0);
\coordinate (v5) at (-0.5,-0.87);
\coordinate (v6) at (0.5,-0.87);

\draw[solid,very thick,rotate=60,shift=+(O)] (-0.2,0) -- (.2,0);
\draw[solid,very thick,rotate=120,shift=+(v1)] (-0.2,0) -- (.2,0);
\draw[solid,very thick,rotate=60,shift=+(v2)] (-0.2,0) -- (.2,0);

  \draw node at (1,-.5) {$x$};
     \draw node at (.5,-.1) {\tiny 0};
     \draw node at (.8,.5) {\tiny 1};
     \draw node at (0.2,.5) {\tiny 2};
\up{0}{0}{0}
\down{1}{0}{2}
\end{scope}

\begin{scope}[shift={(4,0)}]
\coordinate (O) at (0.0,0.0);
\coordinate (v1) at (1.0,0.0);
\coordinate (v2) at (0.5,0.87);
\coordinate (v3) at (-0.5,0.87);
\coordinate (v4) at (-1.0,-0.0);
\coordinate (v5) at (-0.5,-0.87);
\coordinate (v6) at (0.5,-0.87);

\draw[solid,very thick,rotate=60,shift=+(O)] (-0.2,0) -- (.2,0);
\draw[solid,very thick,rotate=120,shift=+(v1)] (-0.2,0) -- (.2,0);
\draw[solid,very thick,rotate=60,shift=+(v2)] (-0.2,0) -- (.2,0);

  \draw node at (1,-.5) {$x$};
     \draw node at (.5,-.1) {\tiny 0};
     \draw node at (.8,.5) {\tiny 1};
     \draw node at (0.2,.5) {\tiny 2};
\up{0}{0}{1}
\down{1}{0}{0}
\end{scope}

\begin{scope}[shift={(6,0)}]
\coordinate (O) at (0.0,0.0);
\coordinate (v1) at (1.0,0.0);
\coordinate (v2) at (0.5,0.87);
\coordinate (v3) at (-0.5,0.87);
\coordinate (v4) at (-1.0,-0.0);
\coordinate (v5) at (-0.5,-0.87);
\coordinate (v6) at (0.5,-0.87);

\draw[solid,very thick,rotate=60,shift=+(O)] (-0.2,0) -- (.2,0);
\draw[solid,very thick,rotate=120,shift=+(v1)] (-0.2,0) -- (.2,0);
\draw[solid,very thick,rotate=60,shift=+(v2)] (-0.2,0) -- (.2,0);

  \draw node at (1,-.5) {$x$};
     \draw node at (.5,-.1) {\tiny 0};
     \draw node at (.8,.5) {\tiny 1};
     \draw node at (0.2,.5) {\tiny 2};
\up{0}{0}{1}
\down{1}{0}{2}
\end{scope}

\begin{scope}[shift={(8,0)}]
\coordinate (O) at (0.0,0.0);
\coordinate (v1) at (1.0,0.0);
\coordinate (v2) at (0.5,0.87);
\coordinate (v3) at (-0.5,0.87);
\coordinate (v4) at (-1.0,-0.0);
\coordinate (v5) at (-0.5,-0.87);
\coordinate (v6) at (0.5,-0.87);

\draw[solid,very thick,rotate=60,shift=+(O)] (-0.2,0) -- (.2,0);
\draw[solid,very thick,rotate=120,shift=+(v1)] (-0.2,0) -- (.2,0);
\draw[solid,very thick,rotate=60,shift=+(v2)] (-0.2,0) -- (.2,0);

  \draw node at (1,-.5) {$x$};
     \draw node at (.5,-.1) {\tiny 0};
     \draw node at (.8,.5) {\tiny 1};
     \draw node at (0.2,.5) {\tiny 2};
\up{0}{0}{2}
\down{1}{0}{0}
\end{scope}

\begin{scope}[shift={(10,0)}]
\coordinate (O) at (0.0,0.0);
\coordinate (v1) at (1.0,0.0);
\coordinate (v2) at (0.5,0.87);
\coordinate (v3) at (-0.5,0.87);
\coordinate (v4) at (-1.0,-0.0);
\coordinate (v5) at (-0.5,-0.87);
\coordinate (v6) at (0.5,-0.87);

\draw[solid,very thick,rotate=60,shift=+(O)] (-0.2,0) -- (.2,0);
\draw[solid,very thick,rotate=120,shift=+(v1)] (-0.2,0) -- (.2,0);
\draw[solid,very thick,rotate=60,shift=+(v2)] (-0.2,0) -- (.2,0);

  \draw node at (1,-.5) {$x$};
     \draw node at (.5,-.1) {\tiny 0};
     \draw node at (.8,.5) {\tiny 1};
     \draw node at (0.2,.5) {\tiny 2};
\up{0}{0}{2}
\down{1}{0}{1}
\end{scope}
\end{tikzpicture}
\end{figure}
 
 The argument in Lemma \ref{L - unique extension} shows again that each of these strip extend uniquely to the right. In the figure below, we have labelled with an $x$ all vertices in either side which have a ring in $\Theta_0$ (for $s=0$).

\begin{figure}[H]
 \begin{tikzpicture}[scale=.8]
 
     \draw node at (.5,-.1) {\tiny 0};
     \draw node at (.8,.5) {\tiny 1};
     \draw node at (0.2,.5) {\tiny 2};
     
\up{0}{0}{0}
\down{1}{0}{1}
\up{1}{0}{2}
\down{2}{0}{1}
\up{2}{0}{2}
\down{3}{0}{0}
\up{3}{0}{1}
\down{4}{0}{0}
\up{4}{0}{1}
\down{5}{0}{2}
\up{5}{0}{0}
\down{6}{0}{2}

  \draw node at (1,-.5) {$x$};
  \draw node at (4,-.5) {$x$};
  \draw node at (2.5,1.2) {$x$};
  \draw node at (5.5,1.2) {$x$};
\coordinate (O) at (0.0,0.0);
\coordinate (v1) at (1.0,0.0);
\coordinate (v2) at (0.5,0.87);
\coordinate (v3) at (2.0,0.0);
\coordinate (v4) at (3.0,0.0);
\coordinate (v5) at (4.0,0.0);
\coordinate (v6) at (5.0,0.0);
\coordinate (v7) at (6.0,0.0);
\coordinate (v8) at (1.5,0.87);
\coordinate (v9) at (2.5,0.87);
\coordinate (v10) at (3.5,0.87);
\coordinate (v11) at (4.5,0.87);
\coordinate (v12) at (5.5,0.87);

\draw[solid,very thick,rotate=60,shift=+(O)] (-0.2,0) -- (.2,0);
\draw[solid,very thick,rotate=120,shift=+(v1)] (-0.2,0) -- (.2,0);
\draw[solid,very thick,rotate=60,shift=+(v3)] (-0.2,0) -- (.2,0);
\draw[solid,very thick,rotate=120,shift=+(v4)] (-0.2,0) -- (.2,0);
\draw[solid,very thick,rotate=60,shift=+(v5)] (-0.2,0) -- (.2,0);
\draw[solid,very thick,rotate=120,shift=+(v6)] (-0.2,0) -- (.2,0);
\draw[solid,very thick,rotate=60,shift=+(v7)] (-0.2,0) -- (.2,0);
\draw[solid,very thick,rotate=60,shift=+(v2)] (-0.2,0) -- (.2,0);
\draw[solid,very thick,rotate=120,shift=+(v8)] (-0.2,0) -- (.2,0);
\draw[solid,very thick,rotate=60,shift=+(v9)] (-0.2,0) -- (.2,0);
\draw[solid,very thick,rotate=120,shift=+(v10)] (-0.2,0) -- (.2,0);
\draw[solid,very thick,rotate=60,shift=+(v11)] (-0.2,0) -- (.2,0);
\draw[solid,very thick,rotate=120,shift=+(v12)] (-0.2,0) -- (.2,0);

  \draw node at (3,-1.5) {(a)};

    \end{tikzpicture}\ \ \ \ 
 \begin{tikzpicture}[scale=.8]
 
     \draw node at (.5,-.1) {\tiny 0};
     \draw node at (.8,.5) {\tiny 1};
     \draw node at (0.2,.5) {\tiny 2};
     
\up{0}{0}{0}
\down{1}{0}{2}
\up{1}{0}{1}
\down{2}{0}{0}
\up{2}{0}{2}
\down{3}{0}{1}
\up{3}{0}{0}
\down{4}{0}{2}
\up{4}{0}{1}
\down{5}{0}{0}
\up{5}{0}{2}
\down{6}{0}{1}

  \draw node at (1,-.5) {$x$};
  \draw node at (4,-.5) {$x$};
  \draw node at (2.5,1.2) {$x$};
  \draw node at (5.5,1.2) {$x$};
\coordinate (O) at (0.0,0.0);
\coordinate (v1) at (1.0,0.0);
\coordinate (v2) at (0.5,0.87);
\coordinate (v3) at (2.0,0.0);
\coordinate (v4) at (3.0,0.0);
\coordinate (v5) at (4.0,0.0);
\coordinate (v6) at (5.0,0.0);
\coordinate (v7) at (6.0,0.0);
\coordinate (v8) at (1.5,0.87);
\coordinate (v9) at (2.5,0.87);
\coordinate (v10) at (3.5,0.87);
\coordinate (v11) at (4.5,0.87);
\coordinate (v12) at (5.5,0.87);

\draw[solid,very thick,rotate=60,shift=+(O)] (-0.2,0) -- (.2,0);
\draw[solid,very thick,rotate=120,shift=+(v1)] (-0.2,0) -- (.2,0);
\draw[solid,very thick,rotate=60,shift=+(v3)] (-0.2,0) -- (.2,0);
\draw[solid,very thick,rotate=120,shift=+(v4)] (-0.2,0) -- (.2,0);
\draw[solid,very thick,rotate=60,shift=+(v5)] (-0.2,0) -- (.2,0);
\draw[solid,very thick,rotate=120,shift=+(v6)] (-0.2,0) -- (.2,0);
\draw[solid,very thick,rotate=60,shift=+(v7)] (-0.2,0) -- (.2,0);
\draw[solid,very thick,rotate=60,shift=+(v2)] (-0.2,0) -- (.2,0);
\draw[solid,very thick,rotate=120,shift=+(v8)] (-0.2,0) -- (.2,0);
\draw[solid,very thick,rotate=60,shift=+(v9)] (-0.2,0) -- (.2,0);
\draw[solid,very thick,rotate=120,shift=+(v10)] (-0.2,0) -- (.2,0);
\draw[solid,very thick,rotate=60,shift=+(v11)] (-0.2,0) -- (.2,0);
\draw[solid,very thick,rotate=120,shift=+(v12)] (-0.2,0) -- (.2,0);

  \draw node at (3,-1.5) {(b)};

    \end{tikzpicture}
    \end{figure}

\begin{figure}[H]
 \begin{tikzpicture}[scale=.8]
 
     \draw node at (.5,-.1) {\tiny 0};
     \draw node at (.8,.5) {\tiny 1};
     \draw node at (0.2,.5) {\tiny 2};
     
\up{0}{0}{1}
\down{1}{0}{0}
\up{1}{0}{1}
\down{2}{0}{2}
\up{2}{0}{0}
\down{3}{0}{2}
\up{3}{0}{0}
\down{4}{0}{1}
\up{4}{0}{2}
\down{5}{0}{1}
\up{5}{0}{2}
\down{6}{0}{0}
  \draw node at (1,-.5) {$x$};
  \draw node at (4,-.5) {$x$};
  \draw node at (2.5,1.2) {$x$};
  \draw node at (5.5,1.2) {$x$};
\coordinate (O) at (0.0,0.0);
\coordinate (v1) at (1.0,0.0);
\coordinate (v2) at (0.5,0.87);
\coordinate (v3) at (2.0,0.0);
\coordinate (v4) at (3.0,0.0);
\coordinate (v5) at (4.0,0.0);
\coordinate (v6) at (5.0,0.0);
\coordinate (v7) at (6.0,0.0);
\coordinate (v8) at (1.5,0.87);
\coordinate (v9) at (2.5,0.87);
\coordinate (v10) at (3.5,0.87);
\coordinate (v11) at (4.5,0.87);
\coordinate (v12) at (5.5,0.87);

\draw[solid,very thick,rotate=60,shift=+(O)] (-0.2,0) -- (.2,0);
\draw[solid,very thick,rotate=120,shift=+(v1)] (-0.2,0) -- (.2,0);
\draw[solid,very thick,rotate=60,shift=+(v3)] (-0.2,0) -- (.2,0);
\draw[solid,very thick,rotate=120,shift=+(v4)] (-0.2,0) -- (.2,0);
\draw[solid,very thick,rotate=60,shift=+(v5)] (-0.2,0) -- (.2,0);
\draw[solid,very thick,rotate=120,shift=+(v6)] (-0.2,0) -- (.2,0);
\draw[solid,very thick,rotate=60,shift=+(v7)] (-0.2,0) -- (.2,0);
\draw[solid,very thick,rotate=60,shift=+(v2)] (-0.2,0) -- (.2,0);
\draw[solid,very thick,rotate=120,shift=+(v8)] (-0.2,0) -- (.2,0);
\draw[solid,very thick,rotate=60,shift=+(v9)] (-0.2,0) -- (.2,0);
\draw[solid,very thick,rotate=120,shift=+(v10)] (-0.2,0) -- (.2,0);
\draw[solid,very thick,rotate=60,shift=+(v11)] (-0.2,0) -- (.2,0);
\draw[solid,very thick,rotate=120,shift=+(v12)] (-0.2,0) -- (.2,0);

  \draw node at (3,-1.5) {(c)};

    \end{tikzpicture}
\ \ \ 
 \begin{tikzpicture}[scale=.8]
 
     \draw node at (.5,-.1) {\tiny 0};
     \draw node at (.8,.5) {\tiny 1};
     \draw node at (0.2,.5) {\tiny 2};
     
\up{0}{0}{1}
\down{1}{0}{2}
\up{1}{0}{0}
\down{2}{0}{1}
\up{2}{0}{0}
\down{3}{0}{1}
\up{3}{0}{2}
\down{4}{0}{0}
\up{4}{0}{2}
\down{5}{0}{0}
\up{5}{0}{1}
\down{6}{0}{2}

  \draw node at (1,-.5) {$x$};
  \draw node at (4,-.5) {$x$};
  \draw node at (2.5,1.2) {$x$};
  \draw node at (5.5,1.2) {$x$};
\coordinate (O) at (0.0,0.0);
\coordinate (v1) at (1.0,0.0);
\coordinate (v2) at (0.5,0.87);
\coordinate (v3) at (2.0,0.0);
\coordinate (v4) at (3.0,0.0);
\coordinate (v5) at (4.0,0.0);
\coordinate (v6) at (5.0,0.0);
\coordinate (v7) at (6.0,0.0);
\coordinate (v8) at (1.5,0.87);
\coordinate (v9) at (2.5,0.87);
\coordinate (v10) at (3.5,0.87);
\coordinate (v11) at (4.5,0.87);
\coordinate (v12) at (5.5,0.87);

\draw[solid,very thick,rotate=60,shift=+(O)] (-0.2,0) -- (.2,0);
\draw[solid,very thick,rotate=120,shift=+(v1)] (-0.2,0) -- (.2,0);
\draw[solid,very thick,rotate=60,shift=+(v3)] (-0.2,0) -- (.2,0);
\draw[solid,very thick,rotate=120,shift=+(v4)] (-0.2,0) -- (.2,0);
\draw[solid,very thick,rotate=60,shift=+(v5)] (-0.2,0) -- (.2,0);
\draw[solid,very thick,rotate=120,shift=+(v6)] (-0.2,0) -- (.2,0);
\draw[solid,very thick,rotate=60,shift=+(v7)] (-0.2,0) -- (.2,0);
\draw[solid,very thick,rotate=60,shift=+(v2)] (-0.2,0) -- (.2,0);
\draw[solid,very thick,rotate=120,shift=+(v8)] (-0.2,0) -- (.2,0);
\draw[solid,very thick,rotate=60,shift=+(v9)] (-0.2,0) -- (.2,0);
\draw[solid,very thick,rotate=120,shift=+(v10)] (-0.2,0) -- (.2,0);
\draw[solid,very thick,rotate=60,shift=+(v11)] (-0.2,0) -- (.2,0);
\draw[solid,very thick,rotate=120,shift=+(v12)] (-0.2,0) -- (.2,0);

  \draw node at (3,-1.5) {(d)};

    \end{tikzpicture}

    \end{figure}

\begin{figure}[H]
 \begin{tikzpicture}[scale=.8]
 
     \draw node at (.5,-.1) {\tiny 0};
     \draw node at (.8,.5) {\tiny 1};
     \draw node at (0.2,.5) {\tiny 2};
     
\up{0}{0}{2}
\down{1}{0}{0}
\up{1}{0}{2}
\down{2}{0}{0}
\up{2}{0}{1}
\down{3}{0}{2}
\up{3}{0}{1}
\down{4}{0}{2}
\up{4}{0}{0}
\down{5}{0}{1}
\up{5}{0}{0}
\down{6}{0}{1}
  \draw node at (1,-.5) {$x$};
  \draw node at (4,-.5) {$x$};
  \draw node at (2.5,1.2) {$x$};
  \draw node at (5.5,1.2) {$x$};
\coordinate (O) at (0.0,0.0);
\coordinate (v1) at (1.0,0.0);
\coordinate (v2) at (0.5,0.87);
\coordinate (v3) at (2.0,0.0);
\coordinate (v4) at (3.0,0.0);
\coordinate (v5) at (4.0,0.0);
\coordinate (v6) at (5.0,0.0);
\coordinate (v7) at (6.0,0.0);
\coordinate (v8) at (1.5,0.87);
\coordinate (v9) at (2.5,0.87);
\coordinate (v10) at (3.5,0.87);
\coordinate (v11) at (4.5,0.87);
\coordinate (v12) at (5.5,0.87);

\draw[solid,very thick,rotate=60,shift=+(O)] (-0.2,0) -- (.2,0);
\draw[solid,very thick,rotate=120,shift=+(v1)] (-0.2,0) -- (.2,0);
\draw[solid,very thick,rotate=60,shift=+(v3)] (-0.2,0) -- (.2,0);
\draw[solid,very thick,rotate=120,shift=+(v4)] (-0.2,0) -- (.2,0);
\draw[solid,very thick,rotate=60,shift=+(v5)] (-0.2,0) -- (.2,0);
\draw[solid,very thick,rotate=120,shift=+(v6)] (-0.2,0) -- (.2,0);
\draw[solid,very thick,rotate=60,shift=+(v7)] (-0.2,0) -- (.2,0);
\draw[solid,very thick,rotate=60,shift=+(v2)] (-0.2,0) -- (.2,0);
\draw[solid,very thick,rotate=120,shift=+(v8)] (-0.2,0) -- (.2,0);
\draw[solid,very thick,rotate=60,shift=+(v9)] (-0.2,0) -- (.2,0);
\draw[solid,very thick,rotate=120,shift=+(v10)] (-0.2,0) -- (.2,0);
\draw[solid,very thick,rotate=60,shift=+(v11)] (-0.2,0) -- (.2,0);
\draw[solid,very thick,rotate=120,shift=+(v12)] (-0.2,0) -- (.2,0);

  \draw node at (3,-1.5) {(e)};

    \end{tikzpicture}
\ \ \ 
 \begin{tikzpicture}[scale=.8]
 
     \draw node at (.5,-.1) {\tiny 0};
     \draw node at (.8,.5) {\tiny 1};
     \draw node at (0.2,.5) {\tiny 2};
     
\up{0}{0}{2}
\down{1}{0}{1}
\up{1}{0}{0}
\down{2}{0}{2}
\up{2}{0}{1}
\down{3}{0}{0}
\up{3}{0}{2}
\down{4}{0}{1}
\up{4}{0}{0}
\down{5}{0}{2}
\up{5}{0}{1}
\down{6}{0}{0}

  \draw node at (1,-.5) {$x$};
  \draw node at (4,-.5) {$x$};
  \draw node at (2.5,1.2) {$x$};
  \draw node at (5.5,1.2) {$x$};
\coordinate (O) at (0.0,0.0);
\coordinate (v1) at (1.0,0.0);
\coordinate (v2) at (0.5,0.87);
\coordinate (v3) at (2.0,0.0);
\coordinate (v4) at (3.0,0.0);
\coordinate (v5) at (4.0,0.0);
\coordinate (v6) at (5.0,0.0);
\coordinate (v7) at (6.0,0.0);
\coordinate (v8) at (1.5,0.87);
\coordinate (v9) at (2.5,0.87);
\coordinate (v10) at (3.5,0.87);
\coordinate (v11) at (4.5,0.87);
\coordinate (v12) at (5.5,0.87);

\draw[solid,very thick,rotate=60,shift=+(O)] (-0.2,0) -- (.2,0);
\draw[solid,very thick,rotate=120,shift=+(v1)] (-0.2,0) -- (.2,0);
\draw[solid,very thick,rotate=60,shift=+(v3)] (-0.2,0) -- (.2,0);
\draw[solid,very thick,rotate=120,shift=+(v4)] (-0.2,0) -- (.2,0);
\draw[solid,very thick,rotate=60,shift=+(v5)] (-0.2,0) -- (.2,0);
\draw[solid,very thick,rotate=120,shift=+(v6)] (-0.2,0) -- (.2,0);
\draw[solid,very thick,rotate=60,shift=+(v7)] (-0.2,0) -- (.2,0);
\draw[solid,very thick,rotate=60,shift=+(v2)] (-0.2,0) -- (.2,0);
\draw[solid,very thick,rotate=120,shift=+(v8)] (-0.2,0) -- (.2,0);
\draw[solid,very thick,rotate=60,shift=+(v9)] (-0.2,0) -- (.2,0);
\draw[solid,very thick,rotate=120,shift=+(v10)] (-0.2,0) -- (.2,0);
\draw[solid,very thick,rotate=60,shift=+(v11)] (-0.2,0) -- (.2,0);
\draw[solid,very thick,rotate=120,shift=+(v12)] (-0.2,0) -- (.2,0);

  \draw node at (3,-1.5) {(f)};

    \end{tikzpicture}

    \end{figure}

These 6 strips exhaust all possible (i.e., 24) configurations at $x$ (taking into account both the markings and the distributions). Furthermore, it is clear that the two strips (a) and (d) are identical as 1-strips (with opposite root distributions) and similarly for strips (c) and (e); strip (b) is a glided symmetry of itself (with opposite root distribution), and similarly for strip (f). This gives a total of 4 distinct strips of period 6 in this category of puzzles, as stated in Th.\ \ref{T - main theorem}, which also describes how these strips can be attached to each other into an odd puzzle.

Finally, it remains to describe the puzzles corresponding to the special root distribution $D_0$. The statement of Th.\  \ref{T - main theorem} lists all possible markings respecting this distribution, starting from the centerpiece. Since $D_0$ is unique, the centerpiece can be positioned in precisely two ways relative to the edge marking, namely:

\begin{figure}[H]
\begin{tikzpicture}[scale=.7]

\begin{scope}[shift={(0,0)}]
\coordinate (O) at (0.0,0.0);
\coordinate (v1) at (1.0,0.0);
\coordinate (v2) at (0.5,0.87);

\draw[solid,very thick,rotate=0,shift=+(O)] (-0.2,0) -- (.2,0);
\draw[solid,very thick,rotate=120,shift=+(v1)] (-0.2,0) -- (.2,0);
\draw[solid,very thick,rotate=60,shift=+(v2)] (-0.2,0) -- (.2,0);

  \draw node at (1,-.5) {$x$};
     \draw node at (.5,-.1) {\tiny 0};
     \draw node at (.8,.5) {\tiny 1};
     \draw node at (0.2,.5) {\tiny 2};
\up{0}{0}{}
\down{1}{0}{}
\end{scope}

\begin{scope}[shift={(4,0)}]
\coordinate (O) at (0.0,0.0);
\coordinate (v1) at (1.0,0.0);
\coordinate (v2) at (0.5,0.87);

\draw[solid,very thick,rotate=60,shift=+(O)] (-0.2,0) -- (.2,0);
\draw[solid,very thick,rotate=0,shift=+(v1)] (-0.2,0) -- (.2,0);
\draw[solid,very thick,rotate=120,shift=+(v2)] (-0.2,0) -- (.2,0);

  \draw node at (1,-.5) {$x$};
     \draw node at (.5,-.1) {\tiny 0};
     \draw node at (.8,.5) {\tiny 1};
     \draw node at (0.2,.5) {\tiny 2};
\up{0}{0}{}
\down{1}{0}{}
\end{scope}
\end{tikzpicture}
\end{figure}

In both cases, there are three possible markings on the center triangle, and two possible markings for the adjacent triangle. By an analog of Lemma \ref{L - unique extension}, the extensions are then unique once these two triangles are chosen, which explains why there are twelve possible puzzles in total. Theorem \ref{T - main theorem} provides the computation of these markings explicit on a ball of radius 1 around the center triangle.

     This concludes the proof of our theorem.

\end{document}